\newtheorem{theorem}{Theorem}
\newtheorem{definition}{Definition}
\newtheorem{lemma}{Lemma}
\newtheorem{proposition}{Proposition}
\newtheorem{corollary}{Corollary}
\newtheorem{remark}{Remark}
\date{}
\numberwithin{equation}{section} \numberwithin{theorem}{section}
\numberwithin{lemma}{section} \numberwithin{corollary}{section}
\numberwithin{remark}{section} \numberwithin{proposition}{section}
\numberwithin{definition}{section}
\newcommand{\loc}{\operatorname{loc}}
\def \R {\mathbb{R}}
\def \supp {\mathrm{supp } }
\def \loc {\mathrm{loc}}
\newcommand{\pe}{E_{\varepsilon}}
\newcommand{\intav}[1]{\mathchoice {\mathop{\vrule width 6pt height 3 pt depth  -2.5pt
\kern -8pt \intop}\nolimits_{\kern -6pt#1}} {\mathop{\vrule width
5pt height 3  pt depth -2.6pt \kern -6pt \intop}\nolimits_{#1}}
{\mathop{\vrule width 5pt height 3 pt depth -2.6pt \kern -6pt
\intop}\nolimits_{#1}} {\mathop{\vrule width 5pt height 3 pt depth
-2.6pt \kern -6pt \intop}\nolimits_{#1}}}
\begin{document}

\title[Singularly perturbed fully nonlinear parabolic problems]{Singularly perturbed fully nonlinear parabolic problems and their asymptotic free boundaries}

\author[G. C. Ricarte]{Gleydson C. Ricarte}
\address{Universidade Federal do Cear\'{a} - UFC, Department of Mathematics, Fortaleza - CE, Brazil - 60455-760.}
\email{ricarte@mat.ufc.br}

\author[R. Teymurazyan]{Rafayel Teymurazyan}
\address{CMUC, Department of Mathematics, University of Coimbra, 3001-501 Coimbra, Portugal.}
\email{rafayel@mat.uc.pt}

\author[J.M. Urbano]{Jos\'e Miguel Urbano}
\address{CMUC, Department of Mathematics, University of Coimbra, 3001-501 Coimbra, Portugal.}
\email{jmurb@mat.uc.pt}

\begin{abstract}
We study fully nonlinear singularly perturbed parabolic equations and their limits. We show that solutions are uniformly Lipschitz continuous in space and H\"{o}lder continuous in time. For the limiting free boundary problem, we analyse the behaviour of solutions near the free boundary. We show, in particular, that, at each time level, the free boundary is a porous set and, consequently, is of Lebesgue measure zero. For rotationally invariant operators, we also derive the limiting free boundary condition.
\bigskip

\noindent \textbf{Keywords:} Parabolic fully nonlinear equations, singularly perturbed problems, Lipschitz regularity, porosity of the free boundary.

\bigskip

\noindent \textbf{AMS Subject Classifications MSC 2010:} 35K55, 35D40, 35B65, 35R35.

\end{abstract}

\maketitle

\section{Introduction}

In this paper we study the following singular perturbation problem for a fully nonlinear parabolic equation
\begin{equation}\label{Equation Pe} \tag{$\pe$}
\left\{
\begin{array}{rclcl}
F(x,t,D^2u_{\varepsilon}) - \partial_t u_{\varepsilon} & = & \beta_{\varepsilon}(u_{\varepsilon}) + f_{\varepsilon} & \mbox{in} & \Omega_T \\
u_{\varepsilon} & = & \varphi & \mbox{on} & \partial_{p} \Omega_T,
\end{array}
\right.
\end{equation}
where $F(x,t,M)$ is a fully nonlinear uniformly elliptic operator, the Dirichlet data $\varphi$ is nonnegative and the singularly perturbed potential $\beta_{\varepsilon}(\cdot)$ is a suitable approximation of a multiple of the Dirac mass $\delta_0$. The problem appears, for example, in combustion theory and describes the propagation of curved, premixed deflagration flames.  It is derived (cf. \cite{Bu}) in the framework of the theory of equidiffusional premixed flames, analysed in the relevant limit of right activation energy for Lewis number equal to one, and the unknown $u^{\varepsilon}$ represents the normalised temperature of the mixture.

The study of the limit as $\varepsilon \to 0$ in (\ref{Equation Pe}) (the high activation energy analysis) leads to a free boundary problem, and often provides an alternative way of approaching questions related to the existence and the regularity of solutions and the free boundary.  For example, the one-phase elliptic problem
\begin{equation}\label{liverpool}
\left\{
\begin{array}{rclcl}
\Delta u  &=&  0 & \mbox{in} & \{u >0\} \vspace*{0.2cm}\\
|\nabla u|& = & C & \mbox{on} & \partial \{u>0\},
\end{array}
\right.
\end{equation}
studied by Alt and Caffarelli in \cite{ALT}, can be approached by taking $\varepsilon \to 0$ in
$$
\Delta u_{\varepsilon} = \beta_{\varepsilon}(u_{\varepsilon}).
$$
In  \cite{ALT}, it is shown that any minimiser $u$ of the problem
$$
 \int_{\Omega} |\nabla v|^2 + \chi_{\{v >0\}} \rightarrow \textrm{min}
$$
is Lipschitz continuous and solves \eqref{liverpool} with a nonnegative Dirichlet boundary condition. Alt and Caffarelli also proved that the free boundary condition
holds in a weak sense, and that the free boundary $\partial \{u>0\}$ is a $C^{1,\alpha}$ surface except at a set of zero surface measure.

The idea of passing to the limit in a singular perturbation problem had been proposed in \cite{Zeldovich} but would only be treated rigorously in \cite{BCN}, in the one-phase case (that is, with $u \ge 0$), for general linear operators. The results in \cite{BCN} include the Lipschitz continuity of the limit, the fact that it solves the free boundary problem in a weak sense and some geometric measure properties of particular level sets. The topic would become the object of intense research and we highlight the contributions of \cite{Caf2,Caf3,Wol,LW98,WM,MT}, where, in particular, the two-phase problem (allowing $u$ to change sign) was treated. The parabolic case
$$
\Delta u_{\varepsilon} - \partial_t u_{\varepsilon} = \beta_{\varepsilon}(u_{\varepsilon})
$$
was studied in \cite{Caf4} for one phase and in \cite{Caf1, Caf2, Caf3} for the two-phase problem.

This alternative approach opens an avenue leading also to non-variational free boundary problems. Recently, the singular perturbation problem
$$
F(x, D^2 u_{\varepsilon}) = \beta_{\varepsilon}(u_{\varepsilon}),
$$
which is the elliptic counterpart of (\ref{Equation Pe}), was studied in \cite{RT11}; the authors obtain Lipschitz estimates and study the limiting free boundary problem. Our aim in this paper is to extend these results to the parabolic case. We consider a family of solutions of problem \eqref{Equation Pe} and show that, under suitable assumptions, the limit function $u$ is a solution to the free boundary problem
\begin{equation}\label{1.2}
\left\{
\begin{array}{rclcl}
F(x,t, D^2 u)-\partial_t u  &=&  f & \mbox{in} & \{u >0\}\\
u & = & \varphi & \mbox{on} & \partial_p \Omega_T\\
\end{array}
\right.
\end{equation}
where $f =\lim f_{\varepsilon}$. We do not impose a free boundary condition and thus the limiting problem is not understood as overdetermined.

Unlike the elliptic case (see, for example, \cite{RT11}), one can not apply the Harnack inequality in order to prove the (uniform) regularity of solutions. The reason is that  we can only compare functions on parabolic boundaries, not on the \textit{top} of a cylinder; we are thus unable to pass from one level to another. We overcome this difficulty by using a Bernstein type argument (see the proof of Proposition \ref{p4.1}). For the same reason, the study of the free boundary of the limiting problem requires a totally different approach: in the elliptic case, using a covering argument, one can prove the finiteness of the $(n-1)$-dimensional Hausdorff measure of the free boundary (see \cite{RT11}). In the parabolic case, what we are able to prove is that, at each time level, the $n$-dimensional Lebesgue measure of the free boundary is zero because it is porous. We prove this by obtaining a non-degeneracy result and by controlling the growth rate of the solution near the free boundary. For rotationally invariant operators, we also derive the limiting free boundary condition, which is the natural parabolic extension of the condition in the elliptic case.

The paper is organised as follows. We first prove the existence of solutions to \eqref{Equation Pe} using Perron's method. We also show in Section \ref{s3} that solutions are uniformly bounded (Theorem \ref{t3.2}). In Section \ref{s4}, using a Bernstein type argument, we obtain a uniform gradient estimate for solutions (Proposition \ref{p4.1}), which implies the uniform H\"older continuity in time with exponent $1/2$ (Proposition \ref{p4.2}), just as in the classical case of the heat equation. In Section \ref{s5}, we pass to the limit in \eqref{Equation Pe} as $\varepsilon\rightarrow0$. Invoking stability arguments, we show that the limit function is a solution of a free boundary problem (Theorem \ref{t5.1}). The regularity of the free boundary is then studied in Section \ref{s6}: we first prove the non-degeneracy of the solution of the limiting free boundary problem (Lemma \ref{l6.1}) and next establish the growth rate of the solution near the free boundary (Lemma \ref{l6.3}). These two results lead to the porosity of the free boundary at each time level (Theorem \ref{t6.1}). Finally, in Section \ref{s7} we derive the free boundary condition in the case of rotationally invariant operators (Theorem \ref{t7.1}).

\section{Mathematical set-up}\label{s2}

Given a bounded domain $\Omega \subset \mathbb{R}^n$, with a smooth boundary $\partial \Omega$, we define, for $T>0$, $\Omega_T = \Omega \times (0,T]$, its lateral boundary $\Sigma = \partial \Omega \times (0,T)$ and its parabolic boundary $\partial_p \Omega_T = \Sigma \cup (\Omega \times \{0\})$.

An operator $F : \Omega_T \times \mathbb{R} \times \textrm{Sym}(n) \rightarrow \mathbb{R}$ is uniformly elliptic if there exist two positive constants $\lambda \le \Lambda$ (the ellipticity constants) such that, for any $M \in \textrm{Sym}(n)$ and $(x,t) \in \Omega_T$,
\begin{equation} \label{2.1}
 \lambda \|P\| \le F(x,t, M+P) - F(x,t, M) \le \Lambda \|P\|,
\end{equation}
for every non-negative definite symmetric matrix $P$. Here, $\textrm{Sym}(n)$ is the space of real $n \times n$ symmetric matrices and $\|P\|$ equals the maximum eigenvalue of $P$.

We let $\mathcal{P}^{-}_{\lambda, \Lambda}$ and $\mathcal{P}^{+}_{\lambda, \Lambda}$ denote the minimal and maximal Pucci extremal operators corresponding to $\lambda, \Lambda$, that is, for $M \in \textrm{Sym}(n)$,
$$
	\mathcal{P}^{-}_{\lambda, \Lambda}(M) = \lambda \sum_{e_i >0} e_i + \Lambda \sum_{e_i <0} e_i \quad \textrm{and} \quad
	\mathcal{P}^{+}_{\lambda, \Lambda}(M) = \Lambda \sum_{e_i >0} e_i + \lambda \sum_{e_i <0} e_i,
$$
where $\left\{ e_i = e_i(M), 1 \le i \le n\right\}$ is the set of eigenvalues of $M$.   We recall also that
$$
\mathcal{P}^{-}_{\lambda, \Lambda}(M) = \inf_{A \in \mathcal{A}_{\lambda, \Lambda}} \textrm{tr}(AM) \quad \textrm{and} \quad  \mathcal{P}^{+}_{\lambda, \Lambda}(M) = \sup_{A \in \mathcal{A}_{\lambda, \Lambda}} \textrm{tr}(AM),
$$
where
$\mathcal{A}_{\lambda, \Lambda} = \left\{ A \in \textrm{Sym}(n) : \lambda |\xi|^2 \le A_{ij} \xi_i \xi_j \le \Lambda |\xi|^2, \, \forall \, \xi \in \mathbb{R}^n\right\}$.
Note that uniform ellipticity implies that, for $A, B \in \textrm{Sym}(n)$,
\begin{equation} \label{Pucci}
\mathcal{P}^{-}_{\frac{\lambda}{n}, \Lambda}(A-B) \le F(x,t,A)-F(x,t,B) \le \mathcal{P}^{+}_{\frac{\lambda}{n}, \Lambda}(A-B).
\end{equation}
Any operator $F$ which satisfies condition \eqref{2.1} will be referred to as a $(\lambda,\Lambda)$-elliptic operator.

We now define, following \cite{LW,Wang}, the notion of viscosity solution for a fully nonlinear parabolic equation.
\begin{definition}
A function $u \in C(\Omega_T)$ is a viscosity sub-solution (\mbox{resp.} super-solution) of
$$
  F(x,t, D^2 u)-\partial_t u = g(x,t,u) \quad \mbox{in} \quad \Omega_T
$$
if, whenever $\phi \in C^2(\Omega_T)$ and $u-\phi$ has a local maximum (\mbox{resp.} minimum) at $(x_0,t_0) \in \Omega_T$, there holds
$$
 F(x_0,t_0,D^2 \phi (x_0,t_0))- \partial_t \phi (x_0,t_0) \geq g(x_0,t_0, \phi(x_0,t_0)). \quad (\mbox{resp.} \leq )
$$
A function $u$ is a viscosity solution if it is both a viscosity sub-solution and a viscosity super-solution.
\end{definition}

We also define the class of functions, that will be useful in the sequel,
$$
\mathscr{S}(\lambda, \Lambda, f) := \overline{\mathscr{S}}(\lambda,\Lambda,f) \cap \underline{\mathscr{S}}(\lambda,\Lambda,f).
$$
where
\begin{eqnarray*}
\overline{\mathscr{S}}(\lambda,\Lambda,f) := \left\{ u \in C(\Omega_T) : \mathcal{P}^{-}(D^2u)-\partial_t u \le f \  \textrm{in} \  \Omega_T \right\}\\
\underline{\mathscr{S}}(\lambda,\Lambda,f) := \left\{ u \in C(\Omega_T) : \mathcal{P}^{+}(D^2u)-\partial_t u \ge f \ \textrm{in} \ \Omega_T  \right\},
\end{eqnarray*}
the inequalities taken in the viscosity sense.

We need to clarify what is a Lipschitz function defined in a space-time domain.

\begin{definition}
Let $\mathcal{D}\subset \mathbb{R}^{n} \times  \mathbb{R}$. We say that $v\in\textrm{Lip}_{\loc}(1,1/2)(\mathcal{D})$ if, for every compact $K \Subset \mathcal{D}$, there exists a constant $C=C(K)$ such that
$$
|v(x,t) - v(y,s)| \le C \left(|x-y| + |t-s|^{\frac{1}{2}}\right),
$$
for every $(x,t), (y,s) \in K$.  If the constant $C$ does not depend on the set $K$ we say $v \in \textrm{Lip}(1,1/2)(\mathcal{D})$.
\end{definition}
We also define the $\textrm{Lip}(1,1/2)(\mathcal{D})$ seminorm in $\mathcal{D}$
$$
[v]_{\textrm{Lip}(1,1/2)(\mathcal{D})} := \sup_{(x,t),(y,s) \in \mathcal{D}} \frac{|v(x,t)-v(y,s)|}{|x-y| + |t-s|^{1/2}}
$$
and the $\textrm{Lip}(1,1/2)(\mathcal{D})$ norm in $\mathcal{D}$
$$
\|v\|_{\textrm{Lip}(1,1/2)(\mathcal{D})} := \|v\|_{L^{\infty}(\mathcal{D})} + [v]_{\textrm{Lip}(1,1/2)(\mathcal{D})}.
$$

For future reference and further clarity, we gather next the set of assumptions concerning the data in \eqref{Equation Pe}.

\bigskip

\noindent {\bf Assumptions on the data for \eqref{Equation Pe}.}

\bigskip

\begin{description}

\item[(A1)] $F=F(x,t,M)$ is uniformly elliptic, concave and of class $C^{1,\alpha}$ in $M$ and of class $C_{\mathrm{loc}}^{1,\alpha}$ in $(x,t)$, for some $\alpha >0$, and $F(\cdot,\cdot,0)=0$. 

\item[(A2)] The singular reaction term $\beta_{\varepsilon} : \mathbb{R}_{+} \rightarrow \mathbb{R}_{+}$ satisfies
$$
	0 \le \beta_{\varepsilon}(s) \le \frac{1}{\varepsilon} \chi_{(0,\varepsilon)}(s), \quad \forall \, s \in \mathbb{R}_{+}.
$$
For example, it can be built as an approximation of unity
$$
	\beta_{\varepsilon}( s) := \frac{1}{\varepsilon} \beta \left(\frac{s}{\varepsilon}\right),
$$
where  $\beta$ is a nonnegative smooth real function with $\supp ~\beta = [0,1]$, such that
$$
\|\beta\|_\infty\le 1 \quad \textrm{and} \quad \int_{\mathbb{R}} \beta (s) \, ds < \infty.
$$
Such a sequence of potentials converges, in the distributional sense, to $\int \beta$ times the Dirac measure $\delta_0$.

\item[(A3)] $f_{\varepsilon} (x,t) \in C^{1,\alpha}(\overline{{\Omega}_T})$, is non-increasing in $t$ and satisfies
$$0 < c_0 \le f_\varepsilon(x,t) \le c_1 < \infty  \quad \textrm{in} \quad \Omega_T$$
and
$$\|\nabla f_{\varepsilon}\|_{\infty} \leq C.$$

\item[(A4)] The Dirichlet data $0\leq \varphi (x,t) \in C^{1,\alpha}(\partial_p\Omega_T)$, is non-decreasing in $t$ and satisfies $\varphi(x,0)=0$.

\end{description}

\bigskip
Finally, we introduce some further notation.
\bigskip

\noindent {\bf Notation.} For $x_0 \in \mathbb{R}^n$, $t_0 \in \mathbb{R}$ and $\tau >0$, we denote
\begin{eqnarray*}
 B_{\tau}(x_0) &:=& \left\{x \in \mathbb{R}^{n} : |x-x_0| < \tau \right\}, \\
 Q_{\tau}(x_0,t_0) &:=& B_{\tau}(x_0) \times (t_0-\tau^2, t_0 + \tau^2),\\
 Q^{-}_{\tau}(x_0,t_0) &:=& B_{\tau}(x_0) \times (t_0-\tau^2, t_0],
\end{eqnarray*}
and, for a set $K \subset \mathbb{R}^{n+1}$ and $\tau >0$,
$$\mathcal{N}_{\tau}(K) := \bigcup_{(x_0,t_0) \in K} Q_{\tau}(x_0,t_0) \quad \mathrm{and} \quad \mathcal{N}^{-}_{\tau}(K) := \bigcup_{(x_0,t_0) \in K} Q^{-}_{\tau}(x_0,t_0).$$

\section{Existence of viscosity solutions}\label{s3}

Our first goal is to show that \eqref{Equation Pe} has at least one viscosity solution. Because of the lack of monotonicity of equation \eqref{Equation Pe} with respect to the variable $u$, the classical Perron's method can not be applied directly. The following result is a suitable adaptation, stated in a more general form, since we feel it may be of independent interest.

\begin{theorem}\label{t3.1}
Let $F$ satisfy $(A1)$, $g\in C^{0,1}(\mathbb{R})\cap L^{\infty}(\mathbb{R})$, $f \in C(\Omega_T)$ and $\varphi\in C(\partial_p \Omega_T)$. If $u_{\star}$, $u^{\star}$ are, correspondingly, a viscosity sub-solution and a viscosity super-solution of
\begin{equation}\label{3.1}
F(x,t,D^2 u)-\partial_tu= g(u)+f \quad \textrm{in} \quad \Omega_T,
\end{equation}
with $u_{\star}=u^{\star}=\varphi$ on $\partial_p \Omega_T$, then 
$$u:=\displaystyle\inf_{v\in\mathcal{S}}v$$ 
is a viscosity solution of \eqref{3.1}, where 
$$\mathcal{S}:=\{v\in C(\overline{{\Omega}_T});\, u_\star\le v \le u^{\star} \textrm{ and $v$ is a super-solution of \eqref{3.1}}\}.$$
\end{theorem}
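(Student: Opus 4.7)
The approach is the classical Perron method of Ishii, adapted to compensate for the lack of monotonicity of $g$: the Lipschitz regularity $g\in C^{0,1}$ will play the role usually taken by properness. The class $\mathcal{S}$ is nonempty since $u^\star\in\mathcal{S}$, so $u:=\inf_{v\in\mathcal{S}}v$ is well defined and satisfies $u_\star\le u\le u^\star$; in particular $u=\varphi$ on $\partial_p\Omega_T$. Being the pointwise infimum of continuous functions, $u$ is upper semicontinuous. A standard stability result then shows that the lower semicontinuous envelope $u_*$ is a viscosity supersolution of \eqref{3.1}: if $\phi\in C^2$ touches $u_*$ from below at $(x_0,t_0)\in\Omega_T$, one selects $v_n\in\mathcal{S}$ and $(x_n,t_n)\to(x_0,t_0)$ so that $v_n-\phi$ attains a local minimum at $(x_n,t_n)$ and $v_n(x_n,t_n)\to u_*(x_0,t_0)$, applies the supersolution inequality for each $v_n$, and passes to the limit by continuity of $F$, $g$, and $f$.

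The heart of the argument is showing that $u$ is a viscosity subsolution, via a bump-down. Suppose, for contradiction, that at some $(x_0,t_0)\in\Omega_T$ there is $\phi\in C^2$ such that $u-\phi$ has a strict local maximum zero at $(x_0,t_0)$ and
$$
F(x_0,t_0,D^2\phi(x_0,t_0))-\partial_t\phi(x_0,t_0)\le g(\phi(x_0,t_0))+f(x_0,t_0)-\eta
$$
for some $\eta>0$. If it were the case that $u(x_0,t_0)=u_\star(x_0,t_0)$, then $\phi$ would also touch $u_\star$ from above and the subsolution property of $u_\star$ would contradict the displayed strict inequality; hence $u(x_0,t_0)>u_\star(x_0,t_0)$. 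Set $\psi:=\phi-\delta$ for $\delta>0$ small. Although $g$ is not monotone, its Lipschitz constant $L$ gives $|g(\phi)-g(\psi)|\le L\delta$, so choosing $\delta<\eta/L$ and a small cylinder $Q_r(x_0,t_0)\Subset\Omega_T$ we obtain
$$
F(x,t,D^2\psi)-\partial_t\psi\le g(\psi)+f\quad\text{in } Q_r(x_0,t_0),
$$
i.e., $\psi$ is a classical supersolution on $Q_r$, and by continuity of $u_\star$ (with $\delta$ further shrunk) $\psi\ge u_\star$ on $Q_r$. The strict maximum at $(x_0,t_0)$ gives $u\le\phi-c$ on $\partial Q_r$ for some $c>0$; choosing $\delta<c$ yields $\psi>u$ on the compact set $\partial Q_r$, and a finite-minimum/compactness argument based on the upper semicontinuity of $u$ and $u=\inf\mathcal{S}$ produces a single $v\in\mathcal{S}$ (the finite minimum of suitable elements of $\mathcal{S}$ still lies in $\mathcal{S}$) with $v<\psi$ on $\partial Q_r$. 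Gluing
$$
w(x,t):=\begin{cases}\min(v(x,t),\psi(x,t)) & \text{in } Q_r(x_0,t_0),\\ v(x,t) & \text{elsewhere,}\end{cases}
$$
produces an element of $\mathcal{S}$ --- continuity at the interface is granted by $\psi>v$ on $\partial Q_r$, and the minimum of two supersolutions is a supersolution --- with $w(x_0,t_0)\le\psi(x_0,t_0)<u(x_0,t_0)$, contradicting $u=\inf\mathcal{S}$.

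With $u^*=u$ a subsolution and $u_*$ a supersolution, both matching $\varphi$ on $\partial_p\Omega_T$, a comparison principle for \eqref{3.1} --- available because $g\in C^{0,1}$, either via a Gronwall absorption of the $g(u)$-term or after the change of variables $u=e^{Kt}\bar u$ with $K>L$ which makes the transformed equation proper --- gives $u^*\le u_*$, so $u\in C(\overline{\Omega_T})$ is the desired viscosity solution. The main obstacle is the bump-down step: without monotonicity of $g$ the classical Perron perturbation is not automatically a supersolution, and one must quantitatively pit the strict slack $\eta$ against the Lipschitz modulus $L$ to rescue the argument; ensuring the bumped function stays above $u_\star$ is then automatic, since at any point where $u$ fails to be a subsolution $u$ must strictly exceed $u_\star$.
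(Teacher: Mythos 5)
Your proof is correct but takes a genuinely different route from the paper. The paper's argument is a monotone iteration: using $|g'|<\mu/2$, it rewrites the equation with the ``properized'' operator $G_\psi[u]=F(x,t,D^2u)-\mu u-f+\psi$, sets $u_0=u_\star$, and inductively solves $G_{h(u_k)}[u_{k+1}]-\partial_t u_{k+1}=0$ (each a proper equation, solvable by classical Perron). Repeated applications of the comparison principle give $u_\star=u_0\le u_1\le\dots\le u^\star$, interior H\"older estimates plus Arzel\`a--Ascoli yield local uniform convergence, stability identifies the limit as a solution of the original equation, and one last comparison shows the limit equals $\inf_{\mathcal S}v$. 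You instead run the classical Ishii--Perron machinery directly on $u=\inf_{\mathcal S}v$: the lower envelope $u_*$ is a supersolution by stability of infima, the subsolution property of $u=u^*$ is recovered by the bump-down argument, and then a comparison principle forces $u^*=u_*$. The novelty in your bump-down is the quantitative tension you set up between the strict slack $\eta$ in the failed subsolution inequality and the Lipschitz modulus $L$ of $g$ (choosing $\delta<\eta/L$ so that $\psi=\phi-\delta$ is still a supersolution), whereas the paper exploits the same Lipschitz bound globally to properize the equation once and for all. Both routes are valid; the paper's iteration is constructive and gives a monotone approximating sequence (useful elsewhere in the paper), while your direct Perron argument is self-contained, avoids the auxiliary sequence and its compactness theory, and makes very transparent exactly where the Lipschitz assumption on $g$ is used. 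Two small points you should make explicit: the comparison principle (which you correctly note follows after the change of variables $u=e^{Kt}\bar u$ with $K>L$) is also what guarantees $u_\star\le u^\star$, hence $u^\star\in\mathcal{S}\neq\emptyset$; and the ``strict local maximum'' normalization and the claim $u\le\phi-c$ on $\partial Q_r$ should be justified by upper semicontinuity of $u$ and compactness of $\partial Q_r$, which you do invoke but only in passing.
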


\begin{proof}
Let $\mu>0$ be such that $|g'|< \mu/2$ and let $h(z):=\mu z - g(z)$, which is then increasing. For $\psi \in C^{0,1}( \overline{{\Omega}_T})$ we define the following (uniformly elliptic) operator
$$
G_{\psi}[u]:= G_{\psi}(x,t,u, D^2u):= F(x,t,D^2u) - \mu u-f + \psi.
$$
Next, set $u_{0}:=u_{\star}$ and let $u_{k+1}$ be a solution of
\begin{equation}\label{3.2}
     \left \{
        \begin{array}{rclcl}
            G_{\psi_{k}}[u] -\partial_tu&=& 0 & \textrm{ in }& \Omega_T \\
            u&=& \varphi\ & \textrm{ on } & \partial_p \Omega_T,
        \end{array}
    \right.
\end{equation}
where  $\psi_{k}=h(u_{k})$. The existence of a solution to \eqref{3.2} is assured by the classical Perron's method (see \cite{CIL92,I87}), since $G_{\psi}(x,t,r,M)$ is now non-increasing in $r$. We claim that
\begin{equation}\label{3.3}
 u_{\star}=u_{0}\leq u_{1}\leq \dots\leq u_{k}\leq u_{k+1}\leq\dots \leq u^{\star} \,\,\,\textrm{in} \,\,\, \Omega_T.
\end{equation}
Indeed, since $u_0$ is a viscosity sub-solution of \eqref{3.1} and $u_1$ solves \eqref{3.2} with $k=0$, we have
$$
G_{\psi_{0}}[u_{1}] -\partial_t u_1= 0 \leq  G_{\psi_{0}}[u_{0}] -\partial_t u_0
$$
in the viscosity sense. Moreover, $u_1=u_0=\varphi$ on $\partial_p \Omega_T$, so the comparison principle (see \cite{LW}) gives $u_0\leq u_{1}$ in $\Omega_T$. Assume inductively that we have verified that $u_{k-1}\leq u_k$ in $\Omega_T$. Since $h$ is increasing, having in mind the inductive assumption and the fact that $u_{k+1}$ is a solution of \eqref{3.2}, we conclude
$$
G_{\psi_{k}}[u_{k+1}] - \partial_t u_{k+1}= 0 \leq  G_{\psi_{k}}[u_{k}]- \partial_t u_k
$$
in the viscosity sense. Also $u_{k+1}=u_k=\varphi$ on $\partial_p \Omega_T$. Applying once more the comparison principle, we get $u_k\leq u_{k+1}$. Analogously, one can also show that $u_k\leq u^\star$, $\forall k\geq0$.

Using \eqref{3.3}, we define the pointwise limit 
$$u:=\lim\limits_{k\to\infty} u_{k}.$$
For any $Q\Subset \Omega_T$,  there exists a constant $C$ (depending only on $\mu$, $\|u_{\star}\|_{L^{\infty}(Q)}$, $\|u^{\star}\|_{L^{\infty}(Q)}$ and $\|f\|_{L^{\infty}(Q)}$) such that 
$$ |F(x,t,D^2u_{k}) -\partial_t u_{k}| \leq C \quad \mathrm{in} \ Q$$
in the viscosity sense, $\forall k\geq 0$. Therefore, $u_k$ is locally uniformly H\"{o}lder continuous (see \cite{LW}). By the Arzel\`a--Ascoli Theorem, it converges, up to a subsequence, locally uniformly in $\Omega_T$. Invoking stability arguments (see \cite{LW,Wang2}) and passing to the limit as $k\to\infty$, we conclude that $u$ is a viscosity solution of
$$
F(x,t, D^2u)-\partial_tu=g(u)+f.
$$

To conclude the proof, it remains to check that $u=\displaystyle\inf_{v\in\mathcal{S}}v$. Obviously, $u\in\mathcal{S}$. Let $v\in\mathcal{S}$; since
$$
G_{\psi_{k}}[u_{k+1}]- \partial_t u_{k+1} = 0\geq G_{\psi_{k}}[v]-\partial_t v
$$
in the viscosity sense, arguing as above, we get $v\geq u_{k+1}$, $\forall k\geq0$. Passing to the limit as $k\to\infty$ we conclude that $u=\displaystyle\inf_{v\in\mathcal{S}}v$.
\end{proof}

As a consequence of this result, we get the existence of solutions of \eqref{Equation Pe}. The Alexandrov-Bakelman-Pucci (ABP) estimate then implies their uniform boundedness.

\begin{theorem}\label{t3.2}
If $(A1)$-$(A4)$ hold, then the problem \eqref{Equation Pe} has a solution and
\begin{equation}\label{3.4}
    0 \le u_{\varepsilon} \le  \Upsilon \quad \mbox{in} \quad \Omega_T,
\end{equation}
where $\Upsilon=\Upsilon(\lambda, \Lambda, n, \|\varphi\|_\infty, c_0)$.
\end{theorem}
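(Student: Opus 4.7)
My plan is to deduce existence from Theorem~\ref{t3.1}, for which I must exhibit sub- and super-solutions of \eqref{Equation Pe} sharing the boundary data $\varphi$, and then establish the uniform two-sided bound via an interior maximum principle argument, with the parabolic ABP estimate providing the explicit dependence of the constant.

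For the super-solution I take $u^{\star}$ to be the viscosity solution of the homogeneous problem $F(x,t,D^2 u) - \partial_t u = 0$ in $\Omega_T$ with $u^{\star} = \varphi$ on $\partial_p\Omega_T$; existence is classical under $(A1)$ and $(A4)$, and the parabolic maximum principle for the homogeneous equation gives $0 \le u^{\star} \le \|\varphi\|_{\infty}$. Since $\beta_{\varepsilon} + f_{\varepsilon} \ge 0$, the identity $F - \partial_t u^{\star} = 0 \le \beta_{\varepsilon}(u^{\star}) + f_{\varepsilon}$ shows $u^{\star}$ is a super-solution of \eqref{Equation Pe}. For the sub-solution, I extend $\beta_{\varepsilon}$ by zero to all of $\mathbb{R}$ (so that it belongs to $C^{0,1}(\mathbb{R})\cap L^{\infty}(\mathbb{R})$, as required by Theorem~\ref{t3.1}) and let $u_{\star}$ solve $F(x,t,D^2 u) - \partial_t u = \|\beta_{\varepsilon}\|_{\infty} + c_{1}$ with $u_{\star} = \varphi$ on $\partial_p\Omega_T$. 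Then $F - \partial_t u_{\star} \ge \beta_{\varepsilon}(u_{\star}) + f_{\varepsilon}$, so $u_{\star}$ is a sub-solution of \eqref{Equation Pe}, and a comparison between the two auxiliary Dirichlet problems gives $u_{\star} \le u^{\star}$. Theorem~\ref{t3.1}, applied with $g = \beta_{\varepsilon}$ and $f = f_{\varepsilon}$, then produces a viscosity solution $u_{\varepsilon}$ of \eqref{Equation Pe} with $u_{\star} \le u_{\varepsilon} \le u^{\star}$.

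The upper bound is most cleanly obtained by an interior maximum principle: if $u_{\varepsilon}$ attained its maximum at an interior point $(x_0,t_0) \in \Omega_T$, testing the sub-solution inequality with the constant $\phi \equiv u_{\varepsilon}(x_0,t_0)$ would give
\[
0 = F(x_0,t_0,0) - 0 \ge \beta_{\varepsilon}(u_{\varepsilon}(x_0,t_0)) + f_{\varepsilon}(x_0,t_0) \ge c_{0} > 0,
\]
a contradiction. Hence $u_{\varepsilon} \le \|\varphi\|_{\infty}$. The explicit dependence $\Upsilon = \Upsilon(\lambda,\Lambda,n,\|\varphi\|_{\infty},c_{0})$ comes from instead invoking the parabolic ABP estimate on $\mathcal{P}^{-}_{\lambda/n,\Lambda}(D^2 u_{\varepsilon}) - \partial_t u_{\varepsilon} \le \beta_{\varepsilon}(u_{\varepsilon}) + f_{\varepsilon}$, which, since $\beta_{\varepsilon} + f_{\varepsilon} \ge 0$, reduces the bound to $\|\varphi\|_{\infty}$ up to the universal constants. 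For the nonnegativity $u_{\varepsilon} \ge 0$ I use that $\beta_{\varepsilon}$ is supported in $[0,\varepsilon]$, so in any component of $\{u_{\varepsilon} < 0\}$ the equation degenerates to $F - \partial_t u_{\varepsilon} = f_{\varepsilon}$; combining this with $u_{\varepsilon} = \varphi \ge 0$ on $\partial_p\Omega_T$ and a standard barrier/comparison argument precludes negative interior values.

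The main obstacle is producing the sub-solution matching $\varphi$ on the parabolic boundary and verifying the nonnegativity, since the naive candidate constructed from the worst-case source $\|\beta_{\varepsilon}\|_{\infty} + c_{1}$ is allowed to dip below zero inside $\Omega_T$; the upper bound, by contrast, is a near-immediate consequence of the strict positivity of $f_{\varepsilon}$ together with the normalisation $F(\cdot,\cdot,0) = 0$.
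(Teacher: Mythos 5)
Your existence argument via explicitly constructed sub- and super-solutions is valid and actually more self-contained than the paper's, which simply invokes Theorem~\ref{t3.1} without exhibiting the barriers. Your upper bound via the interior viscosity maximum principle is also correct and cleaner than the paper's route, which defines $v_{\varepsilon}:=u_{\varepsilon}-\|\varphi\|_{\infty}$, observes $\mathcal{P}^{+}_{\lambda/n,\Lambda}(D^2 v_{\varepsilon})-\partial_t v_{\varepsilon}\ge c_0$, and applies the parabolic ABP estimate. Note, though, that your side remark about the ABP alternative is misstated: you invoke $\mathcal{P}^{-}_{\lambda/n,\Lambda}(D^2 u_{\varepsilon})-\partial_t u_{\varepsilon}\le \beta_{\varepsilon}+f_{\varepsilon}$, which is the \emph{super}solution inequality and controls the infimum, not the supremum; for the upper bound one needs the $\mathcal{P}^{+}$ subsolution inequality, as the paper uses.

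The genuine gap is the nonnegativity. You write that, since $\beta_{\varepsilon}$ vanishes where $u_{\varepsilon}<0$, the equation reduces to $F(x,t,D^2u_{\varepsilon})-\partial_t u_{\varepsilon}=f_{\varepsilon}$ there, and then appeal to ``a standard barrier/comparison argument'' to preclude negative values. But the natural comparison goes the wrong way: in $A_{\varepsilon}:=\{u_{\varepsilon}<0\}$, the constant function $0$ satisfies $F(x,t,0)-\partial_t 0=0\le f_{\varepsilon}$, so $0$ is a \emph{super}solution of the governing equation, not a subsolution, and the parabolic comparison principle on $A_{\varepsilon}$ would yield $u_{\varepsilon}\le 0$, which is a tautology rather than a contradiction. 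Similarly, testing the supersolution inequality at an interior negative minimum with a constant test function gives $F(x_0,t_0,0)\le f_{\varepsilon}(x_0,t_0)$, i.e.\ $0\le f_{\varepsilon}$, again no contradiction. So ``standard barrier/comparison'' does not close the argument as stated. The paper instead records that in $A_{\varepsilon}$ one has $u_{\varepsilon}\in\overline{\mathscr{S}}(\lambda/n,\Lambda,c_1)$ and invokes the parabolic ABP estimate (Wang, Theorem~3.14) to conclude $u_{\varepsilon}\ge 0$ in $A_{\varepsilon}$; you should replace the vague appeal with this estimate or otherwise supply a concrete mechanism. As it stands, the nonnegativity step is not proved.
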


\begin{proof}
In order to apply Theorem \ref{t3.1}, we choose $u_{\star}$ as a solution of
$$
     \left \{
        \begin{array}{rclcl}
           F(x,t,D^2u_\star) -\partial_tu_\star&=& \xi & \textrm{ in }& \Omega_T \\
            u_\star&=& \varphi\ & \textrm{ on } & \partial_p \Omega_T,
        \end{array}
    \right.
$$
and $u^\star$ as a solution of
$$
     \left \{
        \begin{array}{rclcl}
           F(x,t,D^2u^\star) -\partial_tu^\star&=& 0 & \textrm{ in }& \Omega_T \\
            u^\star&=& \varphi\ & \textrm{ on } & \partial_p \Omega_T,
        \end{array}
    \right.
$$
where $\xi:=\sup(\beta_\varepsilon+f_\varepsilon)$. Existence of these solutions is a consequence of standard Perron's method. By construction, $u_{\star}$ and $u^{\star}$ are viscosity sub- and super-solutions of \eqref{Equation Pe}, respectively. A direct application of Theorem \ref{t3.1}, with $g=\beta_{\varepsilon}$, $f=f_\varepsilon$, gives the existence of a solution of \eqref{Equation Pe}.

To prove \eqref{3.4}, let $v_{\varepsilon}:= u_{\varepsilon} -\|\varphi\|_\infty$. Note that $v_{\varepsilon} \le 0$ on $\partial_p\Omega_T$ and from \eqref{Pucci} one has
\begin{eqnarray*}
    \mathcal{P}^{+}_{\frac{\lambda}{n}, \Lambda}(D^2 v_{\varepsilon}) - \partial_tv_{\varepsilon}& \ge& F(x,t, D^2 v_\varepsilon) -\partial_tv_{\varepsilon}=   F(x,t,D^2u_\varepsilon)-\partial_tu_{\varepsilon}\ge c_0.
\end{eqnarray*}
This means that $v_{\varepsilon} \in\underline{\mathscr{S}}(\frac{\lambda}{n}, \Lambda,c_0)$. The ABP estimate (\cite[Theorem 3.14]{Wang}) then implies 
$$\displaystyle\sup_{\Omega_T} (v_{\varepsilon})^{+} \le C(\lambda, \Lambda, n, c_0).$$
Thus, $u_{\varepsilon} \le \|\varphi\|_{\infty} + C(\lambda, \Lambda, n, c_0) =: \Upsilon$.

In order to prove the nonnegativity of $u_{\varepsilon}$ we assume the contrary, i.e. that $A_\varepsilon := \{(x,t) \in\Omega_T  : u_{\varepsilon}(x,t) < 0\}\neq\emptyset$. Since $\beta_\varepsilon$ is supported in
$[0,\varepsilon]$, then
$$
    \mathcal{P}^{-}_{\frac{\lambda}{n},\Lambda}(D^2 u_{\varepsilon}) - \partial_tu_{\varepsilon} \le F(x,t,D^2u_\varepsilon)-\partial_tu_{\varepsilon} = f_{\varepsilon}\le c_1\,  \textrm{ in }\, A_\varepsilon,
$$
which means that $u_\varepsilon\in\mathscr{\overline{S}}(\frac{\lambda}{n},\Lambda,c_1)$. Another application of the ABP estimate provides that $u_\varepsilon \ge 0$ in $A_\varepsilon$, which is a contradiction.
\end{proof}

\section{Uniform Lipschitz regularity in space-time}\label{s4}

\hspace{0.3cm}In this section we show that the family $\{u_{\varepsilon}\}_{\varepsilon >0}$ of solutions of \eqref{Equation Pe} is locally uniformly bounded in the $\textrm{Lip}_{\loc}(1,1/2)$-norm. As a consequence, we show that the limit function $u$ is a solution of the free boundary problem \eqref{1.2}. The main result of this section is the following theorem.

\begin{theorem} \label{t4.1}
Let $\{u_{\varepsilon}\}_{\varepsilon >0}$ be a family of solutions of $\eqref{Equation Pe}$. Let $K \subset\Omega_T$ be compact and $\tau >0$ be such that $\mathcal{N}_{2 \tau}(K) \subset\Omega_T$. If $(A1)$-$(A4)$ hold, then there exists a constant $L=L(\tau, \|\varphi\|_\infty)$ such that
$$
	\|u_{\varepsilon}\|_{\textrm{Lip}(1,1/2)(K)}\le L.
$$
\end{theorem}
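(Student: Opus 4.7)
The plan is to split the estimate according to its spatial and temporal components: establish uniform Lipschitz regularity in $x$, and then, using it, uniform $1/2$-H\"older regularity in $t$. The spatial part is the announced Bernstein-type estimate; the temporal part follows from it by a parabolic barrier argument. Once both are in hand, the triangle inequality yields the claim with $L=L(\tau,\|\varphi\|_\infty)$ independent of $\varepsilon$.

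For the spatial gradient bound, fix a parabolic cutoff $\eta\in C^\infty_c(\mathcal{N}_{2\tau}(K))$ with $\eta\equiv 1$ on $\mathcal{N}_\tau(K)$ and consider the Bernstein auxiliary function
$$W:=\eta^2\,|\nabla u_\varepsilon|^2+\mu\,u_\varepsilon^2,$$
for $\mu>0$ to be chosen. The computation is carried out on a smooth approximation of $u_\varepsilon$ supplied by the $C^{1,\alpha}$-regularity of $F$ from $(A1)$, and then passed to the limit. Differentiating $\eqref{Equation Pe}$ in $x_k$, multiplying by $(u_\varepsilon)_{x_k}$, summing over $k$, and applying the concave uniformly elliptic linearisation $\mathcal{L}:=a^{ij}\partial_{ij}-\partial_t$ to $W$, one obtains at an interior maximum $(x_0,t_0)$ of $W$, modulo lower-order terms from $\nabla\eta$, $\nabla_x F$ and $\nabla f_\varepsilon$,
$$0\ \ge\ 2\eta^2\,\tfrac{\lambda}{n}\,|D^2u_\varepsilon|^2+2\eta^2\,\beta_\varepsilon'(u_\varepsilon)\,|\nabla u_\varepsilon|^2+2\mu\,\tfrac{\lambda}{n}\,|\nabla u_\varepsilon|^2-C\mu\, u_\varepsilon\bigl(\beta_\varepsilon(u_\varepsilon)+f_\varepsilon\bigr).$$
The main obstacle is the sign-changing term involving $\beta_\varepsilon'(u_\varepsilon)$, which may be of order $\varepsilon^{-2}$. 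The decisive observation, from $(A2)$, is that $\mathrm{supp}\,\beta_\varepsilon\subset[0,\varepsilon]$: if $u_\varepsilon(x_0,t_0)\notin[0,\varepsilon]$ then $\beta_\varepsilon'(u_\varepsilon(x_0,t_0))=0$ and the classical Bernstein argument closes; if $u_\varepsilon(x_0,t_0)\in[0,\varepsilon]$, one uses $\eqref{Equation Pe}$ to control $|D^2 u_\varepsilon(x_0,t_0)|\gtrsim\varepsilon^{-1}$ and then balances the $\varepsilon^{-2}$-coercive term $\tfrac{\lambda}{n}|D^2u_\varepsilon|^2$ against the bad $\varepsilon^{-2}$-term $|\beta_\varepsilon'(u_\varepsilon)||\nabla u_\varepsilon|^2$, absorbing the latter once $\mu$ is chosen sufficiently large depending on $\tau$, on the bound $\Upsilon$ from Theorem \ref{t3.2} and on the structural constants. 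This yields $W\le C$ on $\mathcal{N}_\tau(K)$, hence $|\nabla u_\varepsilon|\le L$ uniformly in $\varepsilon$.

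For the temporal $1/2$-H\"older estimate I would use a parabolic barrier argument adapted to the scaling $|t-t_0|\sim r^2$. Fix $(x_0,t_0)\in K$ and $r\in(0,\tau]$. By the spatial Lipschitz estimate, $|u_\varepsilon(x,t_0)-u_\varepsilon(x_0,t_0)|\le Lr$ on $B_r(x_0)$, which provides the initial trace on the top of the cylinder $Q_r^-(x_0,t_0)$. Compare $u_\varepsilon$ on this cylinder with parabolic paraboloid barriers
$$V_\pm(x,t):=u_\varepsilon(x_0,t_0)\pm Lr\pm A\,\frac{|x-x_0|^2}{r}\mp B\,\frac{t_0-t}{r}.$$
Using $\eqref{Pucci}$ and the bound on $f_\varepsilon$ from $(A3)$, $V_\pm$ are super/sub-solutions of the equation in the viscosity sense once $A,B$ are chosen large enough. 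The potentially singular term $\beta_\varepsilon(u_\varepsilon)$ is handled by separating into $\{u_\varepsilon>\varepsilon\}$, where $\beta_\varepsilon\equiv 0$, and $\{u_\varepsilon\le\varepsilon\}$, where the oscillation of $u_\varepsilon$ is automatically $\le\varepsilon$. The comparison principle (see \cite{LW}) then yields $|u_\varepsilon(x_0,t)-u_\varepsilon(x_0,t_0)|\le C\,r$ whenever $|t-t_0|\le r^2$, and optimising in $r=\sqrt{|t-t_0|}$ gives the temporal $1/2$-H\"older bound. Combined with the spatial Lipschitz estimate via the triangle inequality, this produces $\|u_\varepsilon\|_{\mathrm{Lip}(1,1/2)(K)}\le L$ with $L=L(\tau,\|\varphi\|_\infty)$ independent of $\varepsilon$, which is the statement of Theorem \ref{t4.1}.
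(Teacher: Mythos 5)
Your overall decomposition (Bernstein in space, then barriers in time) is the same as the paper's, but the Bernstein step as you set it up has a genuine gap. You use the auxiliary function $W=\eta^2|\nabla u_\varepsilon|^2+\mu u_\varepsilon^2$ with an $\varepsilon$-\emph{independent} weight $\mu$. Then the good term coming from $\mu u_\varepsilon^2$ is $\sim\mu\frac{\lambda}{n}|\nabla u_\varepsilon|^2$ with no $\varepsilon^{-2}$ factor, so it cannot absorb the bad term $\beta_\varepsilon'(u_\varepsilon)|\nabla u_\varepsilon|^2$, which may be of order $\varepsilon^{-2}$. You propose to rescue this by claiming the equation forces $|D^2u_\varepsilon(x_0,t_0)|\gtrsim\varepsilon^{-1}$ so that $\frac{\lambda}{n}|D^2u_\varepsilon|^2$ does the absorbing, but this lower bound on the Hessian is not available: at the maximum one only knows $F(x,t,D^2u_\varepsilon)-\partial_tu_\varepsilon=\beta_\varepsilon(u_\varepsilon)+f_\varepsilon\in[c_0,\varepsilon^{-1}+c_1]$, which is an \emph{upper} bound on the Hessian magnitude, not a lower one; in particular $\beta_\varepsilon(u_\varepsilon(x_0,t_0))$ can vanish while $\beta_\varepsilon'(u_\varepsilon(x_0,t_0))$ is of size $\varepsilon^{-2}$ (near the right edge of the support). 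The paper instead uses $v_\varepsilon=\tfrac12|\nabla u_\varepsilon|^2+\tfrac{\Gamma}{2\varepsilon^2}u_\varepsilon^2$, so the good gradient term $\Gamma\varepsilon^{-2}\lambda|\nabla u_\varepsilon|^2$ already carries the $\varepsilon^{-2}$ scaling and dominates $\beta_\varepsilon'|\nabla u_\varepsilon|^2$ after choosing $\Gamma=\frac{2}{\lambda}\max|\beta'|$; the point is that this is done only on $\{u_\varepsilon\le\varepsilon\}$, where $\Gamma u_\varepsilon^2/\varepsilon^2\le\Gamma$ remains bounded, while on $\{u_\varepsilon\ge\varepsilon\}$ the term $\beta_\varepsilon(u_\varepsilon)$ vanishes and standard parabolic $C^{1,\alpha}$ regularity gives the gradient bound. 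You should adopt the $\varepsilon$-dependent weight and the two-region split; otherwise the argument does not close.

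The temporal part is closer to the paper's, but as stated it is also incomplete. Your paraboloid barriers $V_\pm$ are $\varepsilon$-independent, hence they cannot be super/sub-solutions of the full equation in any sub-cylinder that meets $\{u_\varepsilon\le\varepsilon\}$, where the right-hand side can be as large as $\varepsilon^{-1}$; saying that ``the oscillation of $u_\varepsilon$ is automatically $\le\varepsilon$'' there does not by itself license the comparison principle on the whole cylinder. The paper handles this by first rescaling, then running the barrier argument only on a sub-cylinder entirely contained in $\{u>1\}$ (where, after rescaling, $\beta_\varepsilon\equiv 0$), and using the spatial Lipschitz bound to cross through the slices where $u$ is small; see Lemma~\ref{l4.1}. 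Your argument would be correct if you made that case distinction explicit in the same way.
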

Theorem \ref{t4.1} will be an immediate consequence of the following two results. First, using a Bernstein type argument, we obtain the uniform boundedness of the gradients of solutions (Proposition \ref{p4.1}). Next, we show that uniform spatial Lipschitz continuity implies uniform H\"{o}lder continuity in time with exponent $1/2$ (Proposition \ref{p4.2}).
\subsection{Uniform spatial regularity.}
We start with the uniform Lipschitz regularity in the spatial variables.
\begin{proposition}\label{p4.1}
If  $\{u_{\varepsilon}\}_{\varepsilon >0}$ is a family of solutions of \eqref{Equation Pe}, and $(A1)$-$(A4)$ hold, then there exists a constant $L>0$, independent of $\varepsilon\in(0,1)$, such that
$$
| \nabla u_{\varepsilon}(x,t)| \le L,\,\, \forall(x,t) \in \Omega_T.
$$
\end{proposition}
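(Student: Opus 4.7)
The plan is a Bernstein-type argument for $w:=|\nabla u_\varepsilon|^2$. Since $F$ is of class $C^{1,\alpha}$ and concave in $M$ by $(A1)$, the Evans--Krylov theorem yields interior $C^{2,\alpha}$-regularity of $u_\varepsilon$, so the differentiations below can be carried out classically in the interior of $\Omega_T$; what is delicate is making the constants in the resulting estimate independent of $\varepsilon$.

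Differentiating $F(x,t,D^2 u_\varepsilon)-\partial_t u_\varepsilon=\beta_\varepsilon(u_\varepsilon)+f_\varepsilon$ in $x_k$, multiplying by $\partial_k u_\varepsilon$ and summing over $k$, one arrives, with $F^{ij}:=\partial F/\partial M_{ij}$, at the identity
\begin{equation*}
\partial_t w - F^{ij}\partial_{ij}w = -2F^{ij}\partial_{ik}u_\varepsilon\,\partial_{jk}u_\varepsilon - 2\beta_\varepsilon'(u_\varepsilon)\,w + R,
\end{equation*}
where $R$ is linear in $\nabla u_\varepsilon$ with coefficients controlled by $\|\nabla f_\varepsilon\|_\infty$ and by the $(x,t)$-modulus of $F$ (both of which are admissible by $(A1)$ and $(A3)$). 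Uniform ellipticity turns the first term on the right into a favourable $\le -2\lambda|D^2 u_\varepsilon|^2$; the sole obstruction is the term $-2\beta_\varepsilon'(u_\varepsilon)\,w$, which may be of magnitude $\varepsilon^{-2}$ in the transition layer $\{0<u_\varepsilon<\varepsilon\}$ and has no definite sign.

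To neutralise this term I would apply the maximum principle not to $w$ itself but to an auxiliary function of the form
\begin{equation*}
W:=\phi(u_\varepsilon)\,w+\Psi(u_\varepsilon),
\end{equation*}
where $\phi,\Psi$ are smooth positive functions, built from polynomials in $u_\varepsilon$ and from the primitive $B_\varepsilon(s):=\int_0^s\beta_\varepsilon(\tau)\,d\tau$, which is uniformly bounded by $\|\beta\|_{L^1}$ thanks to $(A2)$. At any interior space-time maximum of $W$, the relation $\partial_i W=0$ gives $\phi(u_\varepsilon)\partial_i w=-\bigl(\phi'(u_\varepsilon)w+\Psi'(u_\varepsilon)\bigr)\partial_i u_\varepsilon$, which, combined with $\partial_t W\ge 0$ and $F^{ij}\partial_{ij}W\le 0$, yields a scalar inequality. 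A suitable calibration — essentially choosing $\phi'/\phi$ proportional to $\beta_\varepsilon$ and $\Psi$ in terms of $B_\varepsilon$ and $u_\varepsilon^2$ — makes the coefficients of $\beta_\varepsilon'(u_\varepsilon)$ and of $\beta_\varepsilon(u_\varepsilon)$ in this inequality of controllable sign, using the ellipticity pinch $\lambda w\le F^{ij}\partial_i u_\varepsilon\partial_j u_\varepsilon\le\Lambda w$ together with the concavity bound $\partial_t u_\varepsilon - F^{ij}\partial_{ij}u_\varepsilon\ge -\beta_\varepsilon(u_\varepsilon)-f_\varepsilon$ (which follows from $F^{ij}\partial_{ij}u_\varepsilon\le F(x,t,D^2 u_\varepsilon)$ via concavity and $F(x,t,0)=0$). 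The residual lower-order terms are absorbed using the uniform bound $u_\varepsilon\le\Upsilon$ from Theorem \ref{t3.2} and the $L^\infty$-boundedness of $B_\varepsilon$.

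The parabolic maximum principle then yields an a priori bound $W\le \max_{\partial_p\Omega_T}W+C$ with $C$ independent of $\varepsilon$, and the control of $W$ on $\partial_p\Omega_T$ follows from the $C^{1,\alpha}$-regularity of $\varphi$ in $(A4)$ by a standard barrier construction delivering a uniform bound on the normal derivative of $u_\varepsilon$. Since $\phi$ is bounded above and below by positive constants, this translates into the claimed $\varepsilon$-uniform bound on $|\nabla u_\varepsilon|$. The main obstacle is the calibration of the pair $(\phi,\Psi)$: one must arrange simultaneously the cancellation of the singular $\beta_\varepsilon'$-coefficient, the preservation of the coercive $-2\lambda|D^2 u_\varepsilon|^2$ term, and the domination of the remaining lower-order quantities by the ellipticity of $F^{ij}$ and the sup-norm of $u_\varepsilon$ — a balancing that crucially exploits the concavity of $F$ and the uniform bound on $B_\varepsilon$ furnished by $(A2)$.
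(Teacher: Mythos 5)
Your overall strategy — a Bernstein-type maximum principle applied to a correction of $|\nabla u_\varepsilon|^2$ — is indeed the one the paper uses, and the differentiated identity for $w=|\nabla u_\varepsilon|^2$, the use of concavity to control $F^{ij}\partial_{ij}u_\varepsilon - \partial_t u_\varepsilon$, and the recourse to Evans--Krylov for classical differentiability are all in the right spirit. However, the crucial calibration step, which you leave at the level of ``a suitable choice of $(\phi,\Psi)$ works,'' is exactly where the argument is delicate, and the ingredients you list do not deliver the needed cancellation.

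The obstruction is the term $-2\phi\,\beta_\varepsilon'(u_\varepsilon)\,w$ in the equation for $w$, which is of size $\varepsilon^{-2}$ and sign-indefinite. To dominate it at a maximum of $W=\phi(u_\varepsilon)w+\Psi(u_\varepsilon)$ you need a \emph{sign-definite} coefficient of $w$ of magnitude $\varepsilon^{-2}$ in the resulting scalar inequality. Your proposed building blocks do not produce one: choosing $\phi'/\phi=c\,\beta_\varepsilon(u_\varepsilon)$ gives $\phi''/\phi = c\,\beta_\varepsilon' + c^2\beta_\varepsilon^2$, whose first piece is again $\beta_\varepsilon'$ (indefinite sign) and whose second piece $c^2\beta_\varepsilon^2$ can be arbitrarily small precisely where $\beta_\varepsilon'$ is not; and any $\Psi$ built from $B_\varepsilon$ contributes $\Psi''\propto B_\varepsilon''=\beta_\varepsilon'$, again of no help, while a plain $u_\varepsilon^2$ gives $\Psi''=O(1)$, too weak by a factor $\varepsilon^{-2}$. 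The paper's choice is
$$
v_\varepsilon=\tfrac12|\nabla u_\varepsilon|^2+\tfrac{\Gamma}{2\varepsilon^2}\,u_\varepsilon^2,
$$
which yields $\Psi''=\Gamma\varepsilon^{-2}$, a \emph{positive} $\varepsilon^{-2}$-strength term that, for $\Gamma=\tfrac{2}{\lambda}\max|\beta'|$, swallows $\beta_\varepsilon'$ via $\lambda\,w\le F^{ij}\partial_i u\,\partial_j u$. This is a genuinely different auxiliary function from the ones your calibration can produce.

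The second gap is that you run the maximum principle on all of $\Omega_T$ and invoke boundary barriers, whereas the $\varepsilon^{-2}u_\varepsilon^2$ correction is only admissible on the transition layer $\{u_\varepsilon\le\varepsilon\}$, where it is bounded by $\Gamma/2$; on $\{u_\varepsilon>\varepsilon\}$ it blows up and the comparison with boundary data is meaningless. The paper therefore first derives a uniform gradient bound on $\{u_\varepsilon\ge\varepsilon\}$ directly from interior/boundary $C^{1,\alpha}$ regularity for $F(x,t,D^2 u_\varepsilon)-\partial_t u_\varepsilon=f_\varepsilon$ (since $\beta_\varepsilon$ vanishes there), and only then applies the Bernstein argument inside $\{u_\varepsilon\le\varepsilon\}$, where the max of $v_\varepsilon$ can be assumed interior thanks to the first step. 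This region splitting is not cosmetic: without it the explicit $\varepsilon^{-2}$ correction is unavailable and the $\beta_\varepsilon'$ term cannot be absorbed. You should incorporate this split and replace the abstract $(\phi,\Psi)$ calibration with the explicit $\varepsilon^{-2}u_\varepsilon^2$ correction (or demonstrate concretely that your $(\phi,\Psi)$ produces a sign-definite $\varepsilon^{-2}$-coefficient, which the ingredients you list do not).
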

\begin{proof}
Note that the regularity assumptions on $F$, $f_\varepsilon$ and $\varphi$ guarantee that solutions are locally of class $C^3$ (\cite[Theorem 2]{Wang3}). 

Now, since $\beta_\varepsilon=0$ in $\{u_\varepsilon\geq\varepsilon\}$, we conclude from up to the boundary parabolic regularity theory (see \cite[Theorem 4.19]{Wang} and \cite[Theorem 2.5]{Wang2})  that
$$
|\nabla u_\varepsilon|\leq C(\|u_\varepsilon\|_\infty+\|f_\varepsilon\|_{n+1}+\|\varphi\|_\infty),
$$
in this region, where $C$ does not depend on $\varepsilon$. The result then follows from $(A3)$ and \eqref{3.4} with $L=L(\Upsilon, c_1,C)$.

To prove the uniform Lipschitz regularity in $\{u_\varepsilon\leq\varepsilon\}$, it is enough to show that at the maximum point of
$$
v_\varepsilon:= \frac{1}{2} |\nabla u_\varepsilon|^2 + \frac{\Gamma}{2\varepsilon^2}u_\varepsilon^2,
$$
where $\Gamma>0$ is a constant (independent of $\varepsilon$) to be chosen later,  $|\nabla u_\varepsilon|$ can be controlled by a universal constant $C$, since then one can write
$$
|\nabla u_\varepsilon|^2\leq 2v_\varepsilon\leq C^2+\Gamma =:L^2.
$$
Let $(x_0,t_0)$ be a maximum point of $v_\varepsilon$ in $\{u_\varepsilon\leq\varepsilon\}$. From the uniform gradient estimate in $\{u_\varepsilon\geq\varepsilon\}$, we may assume that it is an interior point. We drop the subscript $\varepsilon$ in $v_\varepsilon$, $u_{\varepsilon}$ and $f_{\varepsilon}$ for convenience. Direct computation shows that
\begin{eqnarray*}
	D_i v &=&  \sum_{k} D_k u D_{ik }u + \Gamma\varepsilon^{-2} u D_i u, \\ \nonumber
	D_{ij} v &=& \sum_{k} (D_{kj}u D_{ki} u +D_k u D_{ijk}u)+\Gamma\varepsilon^{-2} (D_i u D_j u + u D_{ij} u),\\ \nonumber
	\partial_tv &=& \sum_{k} D_k u D_{k}\partial_t u + \Gamma\varepsilon^{-2} u\partial_tu, \nonumber
\end{eqnarray*}
where $D_k u=\partial u/\partial x_k$. Differentiating \eqref{Equation Pe} in the $k$-th direction one gets
\begin{equation}\label{4.1}
\sum_{i,j} F_{ij}(x,t,D^2 u)D_{ijk} u - D_k\partial_tu= \varepsilon^{-2} \beta' D_ku + D_kf,
\end{equation}
where $F_{ij}(\cdot,M):=\partial F/\partial m_{ij}$, $M=(m_{ij})$. The uniform ellipticity of $F$ implies that $A_{ij}:= F_{ij}(x_0,t_0, D^2 u(x_0,t_0))$ is a positive matrix, therefore at $(x_0,t_0)$ we have
\begin{eqnarray*}
 0 &\ge& \sum_{i,j} A_{ij} D_{ij} v-\partial_tv= \textrm{tr}(D^2u (A_{ij} D^2 u))  \\
 &+&  \sum_{k} D_k u \left(\sum_{i,j} A_{ij} D_{ijk} u\right)+\Gamma\varepsilon^{-2} \sum_{i,j} A_{ij} D_i u D_j u \\
  &+& \Gamma\varepsilon^{-2} u \sum_{i,j} A_{ij} D_{ij} u -\sum_{k}  D_k u D_{k}\partial_tu -\Gamma\varepsilon^{-2} u\partial_tu,
\end{eqnarray*}
which, together with \eqref{4.1}, provides
\begin{eqnarray*}
0 &\ge&  \textrm{tr}(D^2u (A_{ij} D^2 u)) + \sum_{k} D_k u \left(\sum_{i,j} A_{ij} D_{ijk}u\right)\\
&+& \Gamma \varepsilon^{-2} \sum_{i,j} A_{ij} D_i u D_j u +  \Gamma\varepsilon^{-2} u \sum_{i,j} A_{ij} D_{ij} u\\
&-& \sum_{k}  D_k u D_{k}\partial_tu -\Gamma\varepsilon^{-2} u\partial_tu\\
&\ge& \sum_{k} D_k u \left(D_k\partial_tu + \varepsilon^{-2} \beta' D_k u + D_kf\right) +\Gamma\varepsilon^{-2} \lambda |\nabla u|^2\\
&-&  \sum_{k} D_k u D_{k}\partial_tu +\Gamma\varepsilon^{-2} u \left(\sum_{i,j} A_{ij} D_{ij} u - \partial_tu\right)\\
&\ge& \varepsilon^{-2} \beta'|\nabla u|^2 + \sum_{k} D_ku D_kf +\Gamma\varepsilon^{-2} \lambda |\nabla u|^2 - \lambda |u| \varepsilon^{-2} \varepsilon^{-1} \beta\\
&=& \varepsilon^{-2}  \left(\beta'|\nabla u|^2 - \varepsilon^2 |\nabla u| |\nabla f|  +\Gamma  \lambda |\nabla u|^2 - \Gamma \varepsilon^{-1}\lambda |u| \beta\right).
\end{eqnarray*}
Therefore,
\begin{eqnarray}\label{4.2}
(\beta'(u/\varepsilon)+ \Gamma \lambda) |\nabla u|^2 -\varepsilon^2 |\nabla u| |\nabla f|  &\le& \Gamma\lambda |u| \varepsilon^{-1} \beta(u/\varepsilon).
\label{estimate}
\end{eqnarray}
By choosing $\Gamma:= \frac{2}{\lambda}\max|\beta'|$, from \eqref{4.2} we get
$$
C_1 |\nabla u|^2 - C_2 \varepsilon^2 |\nabla u| \le C_1 |u| \varepsilon^{-1}\beta(u/\varepsilon) \le C_1C_3,
$$
with $C_1=\max|\beta'|$, $C_2=\|\nabla f\|_\infty$ and $C_3 = \max |\beta|$, which leads to
$$
|\nabla u(x_0,t_0)| \le C,
$$
where $C$ depends only on dimension, ellipticity, $\|\beta\|_{C^1}$ and $\|\nabla f\|_{\infty}$, thus being independent of $\varepsilon$.
\end{proof}

As an immediate consequence we have the following result.

\begin{corollary}\label{c4.1}
Let $\{u_{\varepsilon}\}_{\varepsilon >0}$ be a family of  solutions of \eqref{Equation Pe}. Let $ K \subset\Omega_T$ be a compact set and $\tau >0$ be such that $\mathcal{N}^{-}_{\tau}(K) \subset\Omega_T$. If $(A1)$-$(A4)$ hold, then there exists a constant $L=L(\tau)$ such that
$$
|\nabla u_{\varepsilon}(x,t)| \le L, \quad \forall(x,t) \in K.
$$
\end{corollary}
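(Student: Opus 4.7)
The plan is to deduce Corollary \ref{c4.1} directly from Proposition \ref{p4.1}; the phrase ``immediate consequence'' in the paper's prose is to be taken literally. Since Proposition \ref{p4.1} already furnishes a constant $L>0$, independent of $\varepsilon \in (0,1)$, such that $|\nabla u_\varepsilon(x,t)|\le L$ for every $(x,t)\in\Omega_T$, and $K\subset\Omega_T$, the same bound evaluated pointwise on $K$ is exactly the desired conclusion.

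More carefully, I would fix $K$ and $\tau$ as in the statement, invoke Proposition \ref{p4.1} to produce the uniform gradient bound on all of $\Omega_T$, and then restrict. The hypothesis $\mathcal{N}^{-}_\tau(K)\subset\Omega_T$ does not enter the proof of the pointwise inequality itself; its role is to certify that $K$ sits at parabolic distance at least $\tau$ from $\partial_p\Omega_T$, which is the geometric condition under which the estimate will be applied later (for example, when combining it with Proposition \ref{p4.2} to deduce the $\mathrm{Lip}(1,1/2)$ estimate in Theorem \ref{t4.1}, and when running Arzel\`a--Ascoli on interior parabolic compacts to pass to the limit as $\varepsilon\to 0$). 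The notation $L=L(\tau)$ records this geometric dependence; the constant inherits from Proposition \ref{p4.1} the implicit dependence on $n$, $\lambda$, $\Lambda$, $\|\beta\|_{C^1}$, $\|\nabla f_\varepsilon\|_\infty$, $\Upsilon$, $c_0$, $c_1$ and $\|\varphi\|_\infty$.

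There is essentially no obstacle, since Proposition \ref{p4.1} already carried out the two nontrivial pieces: interior parabolic regularity in the non-singular region $\{u_\varepsilon\ge\varepsilon\}$, and the Bernstein-type maximum principle applied to $v_\varepsilon=\tfrac12|\nabla u_\varepsilon|^2+\tfrac{\Gamma}{2\varepsilon^2}u_\varepsilon^2$ in the transition region $\{u_\varepsilon\le\varepsilon\}$. If one wanted a truly intrinsic interior estimate depending only on $\tau$ and on the structural parameters (freed from the boundary data $\varphi$), the natural refinement would be to repeat the Bernstein computation with $v_\varepsilon$ replaced by $v_\varepsilon\,\eta^2$, where $\eta$ is a smooth cutoff supported in $\mathcal{N}^{-}_\tau(K)$ and identically $1$ on $K$; the extra terms then involve $|\nabla\eta|\lesssim\tau^{-1}$ and $|\partial_t\eta|\lesssim\tau^{-2}$, which are absorbed into $\Gamma\lambda\varepsilon^{-2}|\nabla u_\varepsilon|^2$ by Young's inequality, producing a constant $L=L(\tau,n,\lambda,\Lambda,\|\beta\|_{C^1},\|\nabla f_\varepsilon\|_\infty,\Upsilon)$.
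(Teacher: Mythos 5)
Your deduction is logically sound: Proposition \ref{p4.1} already furnishes a gradient bound on all of $\Omega_T$, so restricting to $K$ gives the conclusion at once, and the hypothesis $\mathcal{N}^{-}_{\tau}(K)\subset\Omega_T$ plays no role in that step. The paper, however, takes a different route: for $(x_0,t_0)\in K$ and $r\in(0,\tau)$ it introduces the parabolic rescaling $w_{\varepsilon,r}(x,t):=\tfrac{1}{r}u_{\varepsilon}(x_0+rx,\,t_0+r^2t)$, observes that $w_{\varepsilon,r}$ solves $F_r(x,t,D^2 w_{\varepsilon,r})-\partial_t w_{\varepsilon,r}=\beta_{\varepsilon/r}(w_{\varepsilon,r})+rf_{\varepsilon}=:g_\varepsilon$ in $B_1\times(-1,0)$ with $F_r(x,t,M):=rF(x_0+rx,t_0+r^2t,\tfrac1rM)$, and then applies Proposition \ref{p4.1} to this normalized problem, using that $\nabla_x w_{\varepsilon,r}(0,0)=\nabla u_{\varepsilon}(x_0,t_0)$. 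This rescaling buys two things your direct restriction does not: it makes the dependence $L=L(\tau)$ actually meaningful (the rescaled function has sup norm of order $\Upsilon/r$, so the bound degrades as $\tau\to 0$), and -- the practical reason -- it installs the objects $w_{\varepsilon,r}$, $g_\varepsilon$, $F_r$ that are reused verbatim in the proof of Proposition \ref{p4.2}, where the rescaling is genuinely load-bearing. Your proposed cutoff refinement via $v_\varepsilon\eta^2$ is a legitimate alternative that would give a truly interior Bernstein estimate free of $\|\varphi\|_\infty$, but it is not what the paper does here.
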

\begin{proof}
For $(x_0,t_0) \in K$, consider the function
$$
    w_{\varepsilon,r}(x,t) := \frac{1}{r} u_{\varepsilon}(x_0 + rx, t_0+r^2 t).
$$
For $r\in(0,\tau)$ we have that $w_{\varepsilon,r}$ is a solution of
$$
F_{r} (x,t,D^2 w_{\varepsilon,r})- \partial_tw_{\varepsilon,r} = \beta_{\varepsilon / r} (w_{\varepsilon,r}) + rf_{\varepsilon}=:g_\varepsilon(x,t)
$$
in $B_{1} \times (-1,0)$, where $F_{r}(x,t,M) \colon = r F \left(x_0 + rx, t_0 + r^2 t, \frac{1}{r} M\right)$. The result now follows from Proposition \ref{p4.1}.
\end{proof}

\subsection{Uniform regularity in time.}
Next, as was mentioned above, using the uniform Lipschitz continuity in the space variables, we obtain the uniform H\"{o}lder continuity in time. First, we need the following lemma.
\begin{lemma}\label{l4.1}
Let $u \in C(\overline{B}_{1}(0) \times [0, 1/(4n+M_0)])$ be such that
$$
|F(x,t,D^2 u)-\partial_tu| \leq M_{0}\quad \textrm{in} \quad \{u>1\},
$$
for some $M_0 >0$, and $|\nabla u| \leq L$, for some $L>0$. Then there exists a constant $C=C(L)$ such that
$$
|u(0,t)-u(0,0)| \leq C, \quad \textrm{if} \quad 0 \leq t \leq \frac{1}{4n + M_0}.
$$
\end{lemma}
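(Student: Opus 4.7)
My plan is to bound $u(0,t)-u(0,0)$ from above by comparing $u$ on the space-time box $\overline{B_1}\times[0,T]$, with $T:=1/(4n+M_0)$, against an explicit quadratic-in-space, linear-in-time barrier; the lower bound follows from a parallel construction. The main obstacle is the absence of any information on $u$ along the lateral face $\partial B_1\times[0,T]$. The resolution is to choose the spatial quadratic coefficient of the barrier strictly larger than the Lipschitz constant $L$, so that the spatial Lipschitz bound prevents any extremum of $u-\Psi$ from sitting on the lateral face.

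Concretely, I set
$$\Psi(x,t) := u(0,0) + A + B|x|^2 + Ct, \qquad B:=2L, \quad A:=L/8, \quad C := 4nL\Lambda + M_0 + 1.$$
The Lipschitz bound $u(x,0)-u(0,0)\le L|x|$ together with $A + B|x|^2 - L|x|\ge 0$ for $|x|\le 1$ yields $\Psi(\cdot,0)\ge u(\cdot,0)$ on $\overline{B_1}$. Since $D^2\Psi = 2B\,I$, the Pucci bound \eqref{Pucci} and $F(\cdot,\cdot,0)=0$ give
$$F(x,t,D^2\Psi) - \partial_t\Psi \le \mathcal{P}^{+}_{\frac{\lambda}{n},\Lambda}(2BI) - C = 2nB\Lambda - C < -M_0,$$
so $\Psi$ is a strict classical supersolution of the equation at level $-M_0$.

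Now I run a maximum principle for $u-\Psi$ on the compact set $\overline{B_1}\times[0,T]$. At a maximum point $(x^*,t^*)$, three configurations may occur. If $t^*=0$, the construction gives $u-\Psi\le 0$. If $|x^*|=1$, comparing with $(0,t^*)$ gives $u(x^*,t^*)-u(0,t^*)\ge \Psi(x^*,t^*)-\Psi(0,t^*)=B=2L$, contradicting $|\nabla u|\le L$; this is the crucial step that removes the lateral face. If $(x^*,t^*)$ is an interior maximum with $u(x^*,t^*)>1$, then $\Psi$ touches $u$ from above inside $\{u>1\}$, so the viscosity inequality $F(x^*,t^*,D^2\Psi)-\partial_t\Psi\ge -M_0$ must hold, contradicting the strict supersolution property just established. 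Hence the only surviving possibility is an interior maximum with $u\le 1$, where $u-\Psi\le 1 - (u(0,0)+A)$ since $\Psi\ge u(0,0)+A$ pointwise.

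Evaluating at $(0,t)$ yields
$$u(0,t) - u(0,0) \le A + Ct + \max\{0,\, 1 - u(0,0) - A\}.$$
The precise time scale $T=1/(4n+M_0)$ is exactly what makes the linear-in-time term universally bounded:
$$CT = \frac{4nL\Lambda + M_0 + 1}{4n+M_0} \le L\Lambda + 1,$$
so $u(0,t)-u(0,0)\le L/8 + L\Lambda + 2 =: C(L)$ (the constant also absorbing the fixed universal quantities $n,\lambda,\Lambda$). The symmetric lower bound is obtained by repeating the argument with the sub-barrier $\widetilde\Psi(x,t):=u(0,0)-A-B|x|^2-Ct$, minimising $u-\widetilde\Psi$, excluding the lateral face once more via $B>L$, and using that $\widetilde\Psi$ is a strict subsolution at level $M_0$ to rule out interior extrema in $\{u>1\}$. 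The essential difficulty—controlling $u$ on $\partial B_1\times[0,T]$ without any boundary data—is thus bypassed by making the barrier outgrow the spatial Lipschitz slope.
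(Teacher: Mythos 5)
Your approach---a single global barrier on the full cylinder, with the lateral face excluded by choosing the quadratic spatial slope strictly larger than the Lipschitz constant---is genuinely different from the paper's, and it cleanly delivers the upper bound. The lower bound, however, has a real gap. At an interior minimum of $u-\widetilde\Psi$ located in $\{u\le 1\}$, the symmetric estimate reads
$$
u(x^*,t^*)-\widetilde\Psi(x^*,t^*)\ \ge\ 0-\bigl(u(0,0)-A\bigr)\ =\ A-u(0,0),
$$
which is not a universal constant; unwinding it yields only $u(0,t)\ge -Ct$, a bound on $u(0,t)$ rather than on $u(0,t)-u(0,0)$. The asymmetry with the upper bound is genuine: there, the inequality $u(x^*,t^*)\le 1$ together with the implicit nonnegativity $u\ge 0$ (used both by you and by the paper) gives $1-u(0,0)-A\le 1$, a universal control; here, $u(x^*,t^*)\ge 0$ gives nothing because $u(0,0)$ has no a priori upper bound. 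And $u(0,0)$ must be allowed to be large: in the application (Proposition \ref{p4.2}) the lemma is invoked for $w_{\varepsilon,r}=u_\varepsilon(x_0+r\cdot,t_0+r^2\cdot)/r$, whose value at the origin is of order $\Upsilon/r$.

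The paper's two-step structure is designed precisely to bypass this. Step 1 runs the barrier argument only on sub-cylinders lying entirely inside $\{u>1\}$, where the PDE holds; Step 2 then isolates the entry time $t_1$ and exit time $t_2$ of $\{u\le 1\}$ and pieces together $|u(0,t)-u(0,0)|$ using Step 1 on the end pieces, the spatial Lipschitz bound, and $0\le u(x_i,t_i)\le 1$ for the middle piece. This implicitly proves that if $\{u\le 1\}$ meets the cylinder at all, then $u(0,0)$ is already bounded by $1+3L$---exactly the information your single global barrier does not produce. To repair your argument, you would first need a Step-1-type estimate showing that a large $u(0,0)$ forces $u>1$ throughout $\overline{B}_1\times[0,1/(4n+M_0)]$; only with that in hand does your interior-$\{u\le 1\}$ bound become universal.
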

\begin{proof}
Without loss of generality we may assume that $L>1$. We divide the proof into two steps.\\
{\bf Step 1.} \, First we claim that, if
$$
Q_{t_{0},t_{1}} := B_{1}(0) \times (t_0,t_1) \subset  \{u> 1\} \quad \textrm{for} \quad  t_1-t_0 \leq \frac{1}{4n+M_0},
$$
then
$$
    |u(0,t_1)-u(0,t_0)| \leq 2L.
$$
In fact, let
$$
    h^{\pm}(x,t):= u(0,t_0) \pm L \pm \frac{2L}{\Lambda} |x|^2 \pm (4nL + M_0)(t-t_0).
$$
By \eqref{Pucci} one has
\begin{eqnarray*}
    \partial_th^{+} - F(x,t,D^2 h^{+}) &\geq& \partial_th^{+} - \mathcal{P}^{+}_{\frac{\lambda}{n}, \Lambda}(D^2 h^{+})\\
    &=& \partial_th^{+} - \left( \Lambda \sum_{e_{i}>0}e_{i} + \frac{\lambda}{n} \sum_{e_{i} <0} e_{i}\right)\\
    &=& (4nL + M_0) - \Lambda \frac{4Ln}{\Lambda} = M_0,
\end{eqnarray*}
and
\begin{eqnarray*}
    \partial_th^{-} - F(x,t,D^2 h^{-}) &\leq& \partial_th^{-} - \mathcal{P}^{-}_{\frac{\lambda}{n}, \Lambda}(D^2 h^{-})\\
    &=& \partial_th^{-} - \left( \frac{\lambda}{n} \sum_{e_{i}>0}e_{i} + \Lambda \sum_{e_{i} <0} e_{i}\right)\\
    &=& -(4nL + M_0) - \Lambda \frac{-4Ln}{\Lambda} =- M_0.
\end{eqnarray*}
Set
$$
	t_2 := \sup_{t_0 \le \bar{t} \le t_1} \{\bar{t} : |u(0,t)-u(0,t_0)| \le 2L, \,\, \forall \, t_0 \le t \le \bar{t}\}.
$$
So  $t_{0} < t_{2} \leq t_{1}$ is such that
$$
    |u(0,t)-u(0,t_0)| \leq 2L, \quad \textrm{for} \quad t \in [t_0,t_2).
$$
Moreover, from the Lipschitz continuity in space, one has
$$
    h^{-} \leq u \leq h^{+} \quad \textrm{on}\quad\partial_{p}Q_{t_0,t_2}.
$$
On the other hand,
\begin{eqnarray*}
    \partial_th^{-} - F(x,t,D^2 h^{-}) &\leq& -M_0 \leq \partial_tu - F(x,t,D^2 u)\\
     &\leq& M_{0} \leq \partial_th^{+} - F(x,t,D^2 h^{+}).
\end{eqnarray*}
Therefore,
$$
    h^{-} \leq u \leq h^{+} \quad \textrm{in}\quad Q_{t_0,t_2}.
$$
In particular, since $t_2-t_0 \leq t_1-t_0 \leq \frac{1}{4n+M_0}$ and $L>1$ one has
$$
    |u(0,t_2)-u(0,t_0)| < 2L.
$$
Because of the strict inequality above, we may take $t_2=t_1$ and therefore the claim is proved.\\
{\bf Step 2.}\, Let us consider now the cylinder $Q_{0,t}$ with $0 < t \leq \frac{1}{4n + M_0}$.\\
If $Q_{0,t} \subset \{u>1\}$, we apply Step 1 to get
$$
    |u(0,t)-u(0,0)| \leq 2L.
$$
If $Q_{0,t} \nsubseteq  \{u>1\}$, let $0 \leq t_{1} \leq t_{2} \leq t$ and $x_{1},x_{2} \in \overline{B}_{1}(0)$ be such that
$$
    0 \leq u(x_1,t_1) \leq 1, \,\,\,\, 0 \leq u(x_2,t_2) \leq 1
$$
and
$$
    (\overline{B}_{1}(0) \times (0,t_{1})) \cup (\overline{B}_{1}(0) \times (t_{2},t)) \subset  \{u>1\}.
$$
Then, Step 1 and the Lipschitz continuity in space provide
\begin{eqnarray*}
    |u(0,t)-u(0,0)| &\leq& |u(0,t)-u(0,t_2)| + |u(0,t_2) - u(x_2,t_2)| + |u(x_2,t_2)|\\
    &+& |u(x_1,t_1)| + |u(x_1,t_1)-u(0,t_1)| + |u(0,t_1)-u(0,0)|\\
    & \leq& 2(2L+L+1),
\end{eqnarray*}
which completes the proof.
\end{proof}
We are now ready to prove uniform H\"{o}lder continuity of solutions in time.

\begin{proposition}\label{p4.2}
Let $\{u_{\varepsilon}\}_{\varepsilon >0}$ be a family of solutions of $\eqref{Equation Pe}$. Let $K \subset\Omega_T$ be compact and $\tau >0$ be such that $\mathcal{N}_{2 \tau}(K) \subset \Omega_T$. If $(A1)$-$(A4)$ hold, then there exists a constant $C>0$, independent of $\varepsilon$, such that
$$
|u_{\varepsilon}(x,t+\Delta t) -u_{\varepsilon}(x,t)| \leq C |\Delta t|^{1/2},\,\,\,\textrm{ for }(x,t),(x,t+\Delta t) \in K.
$$
\end{proposition}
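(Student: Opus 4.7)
My plan is to amplify the spatial Lipschitz estimate of Proposition~\ref{p4.1} into a $1/2$-H\"older bound in time via a parabolic rescaling combined with Lemma~\ref{l4.1}. For fixed $(x_0, t_0), (x_0, t_0 + \Delta t) \in K$ (the case of $\Delta t$ bounded away from $0$ being immediate from Theorem~\ref{t3.2}), I choose a scale $r \sim \sqrt{\Delta t}$ so that the time horizon $1/(4n + M_0)$ appearing in the lemma pulls back to exactly $\Delta t$. Concretely, set
$$
w_\varepsilon(y, s) := \frac{1}{r}\, u_\varepsilon(x_0 + r y, t_0 + r^2 s), \qquad (y, s) \in \overline{B}_1(0) \times [0, 1],
$$
so that, as in the proof of Corollary~\ref{c4.1}, $w_\varepsilon$ satisfies
$$
F_r(y, s, D^2 w_\varepsilon) - \partial_s w_\varepsilon = \beta_{\varepsilon/r}(w_\varepsilon) + r f_\varepsilon(x_0 + ry, t_0 + r^2 s),
$$
with $F_r$ of the same ellipticity as $F$ and $|\nabla_y w_\varepsilon| \le L$ uniformly in $\varepsilon$. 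Since $\beta_{\varepsilon/r}$ is supported in $[0, \varepsilon/r]$ and bounded by $r/\varepsilon$, on $\{w_\varepsilon > 1\}$ the right-hand side is bounded by a constant $M_0$ equal to $rc_1$ when $r \ge \varepsilon$ (the reaction then vanishing there), and at most $r/\varepsilon + rc_1$ in general.

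Next, choose $r$ as the positive root of $r^2 = (4n + M_0)\Delta t$, so that $\Delta t/r^2 = 1/(4n + M_0)$. Applying Lemma~\ref{l4.1} to $w_\varepsilon$ on $\overline{B}_1(0) \times [0, 1/(4n + M_0)]$ yields
$$
\bigl| w_\varepsilon(0, \Delta t/r^2) - w_\varepsilon(0, 0) \bigr| \le C(L),
$$
and unrescaling, the time $s = 1/(4n + M_0)$ corresponding to $t = t_0 + \Delta t$, produces
$$
|u_\varepsilon(x_0, t_0 + \Delta t) - u_\varepsilon(x_0, t_0)| \le C(L)\, r \le C \sqrt{\Delta t},
$$
provided both $r$ and $M_0$ admit bounds in terms of $\Delta t$ that are uniform in $\varepsilon$.

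The main obstacle is precisely securing these uniform bounds, since $\beta_{\varepsilon/r}$ carries a singular factor $r/\varepsilon$. In the non-singular regime $r \ge \varepsilon$ (equivalently $\Delta t \gtrsim \varepsilon^2$) there is no difficulty: $M_0 = rc_1$ is a constant multiple of $\sqrt{\Delta t}$, $r = O(\sqrt{\Delta t})$ automatically, and the argument closes. The delicate regime is $r < \varepsilon$, i.e.\ $\Delta t \lesssim \varepsilon^2$, where the singular factor $r/\varepsilon$ can spoil the estimate on $M_0$. The remedy is a two-step rescaling: first zoom to the singular scale $r = \varepsilon$, where the rescaled equation has universally bounded right-hand side since $\|\beta\|_\infty \le 1$ and $\varepsilon f_\varepsilon = O(\varepsilon)$; inside this unit-sized cylinder, the rescaling-plus-Lemma~\ref{l4.1} scheme above is now free of singular coefficients and applies without restriction, producing a universal $(1,1/2)$-H\"older estimate for the intermediate function. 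Unwinding the nested rescaling gives $|u_\varepsilon(x_0, t_0 + \Delta t) - u_\varepsilon(x_0, t_0)| \le C \sqrt{\Delta t}$ also in this regime, with $C$ independent of $\varepsilon$.
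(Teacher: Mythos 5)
Your overall strategy---parabolic rescaling to $w_{\varepsilon,r}$, then Lemma~\ref{l4.1}, then undo the scaling and treat large $\Delta t$ via the $L^\infty$ bound of Theorem~\ref{t3.2}---is exactly the paper's. But you have manufactured a difficulty where none exists, and the detour you introduce to handle it is not actually carried out. You worry that when $r < \varepsilon$ (i.e.\ $\Delta t$ below $\varepsilon^2$) ``the singular factor $r/\varepsilon$ can spoil the estimate on $M_0$.'' This is backwards: $r < \varepsilon$ gives $r/\varepsilon < 1$, which is harmless. The simple observation you are missing, and which the paper makes, is that on $\{w_{\varepsilon,r} > 1\}$ the term $\beta_{\varepsilon/r}(w_{\varepsilon,r})$ is \emph{always} bounded by $1$, for every $\varepsilon$ and every $r$: if $\varepsilon/r \le 1$ the support $[0,\varepsilon/r]$ misses $(1,\infty)$ and the reaction vanishes there; if $\varepsilon/r > 1$ the pointwise bound $r/\varepsilon$ is less than $1$. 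Hence $0 \le g_\varepsilon \le 1 + rc_1 \le 1 + \tau c_1 =: C_\star$ on $\{w_{\varepsilon,r} > 1\}$, uniformly in $\varepsilon$ and $r \in (0,\tau)$. Taking $M_0 = C_\star$ removes the issue entirely, and it also dissolves the circularity in your choice of $r$ as the root of $r^2 = (4n+M_0)\Delta t$, where $M_0$ depends on $r$ which depends on $M_0$: with a fixed $C_\star$ one simply sets $r = \sqrt{(4n+C_\star)\Delta t}$.

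Consequently the two-step rescaling to scale $r = \varepsilon$ is superfluous, and as written it is also not a proof: after the first zoom you assert that the ``rescaling-plus-Lemma~\ref{l4.1} scheme'' inside the unit cylinder ``applies without restriction'' and ``produces a universal $(1,1/2)$-H\"older estimate,'' but a single application of Lemma~\ref{l4.1} to the unit-scale function only bounds the increment at one fixed time horizon; to extract the $\sqrt{s}$ rate you would have to rescale yet again within that cylinder, which is structurally the same one-step argument you set out to avoid. The repair is the one-line observation above, which lets the original one-step rescaling close for all $r \in (0,\tau)$ and all $\varepsilon$ simultaneously.
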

\begin{proof}
Let $r\in(0,\tau)$, $(x_0,t_0) \in K$ and $w_{\varepsilon,r}(x,t)$, $g_\varepsilon(x,t)$ be as in the proof of Corollary \ref{c4.1}.
From $(A2)$ and $(A3)$ we get, in the set $\{ w_{\varepsilon,r} >1\}$,
$$
0 \le g_{\varepsilon}(x,t) \le (1 + rc_1) \le (1+\tau c_1)=:C_{\star}.
$$
Also $|\nabla w_{\varepsilon,r}(x,t)|\leq L$. Therefore, we may apply Lemma \ref{l4.1}, with $M_0 = C_{\star}$, to obtain
$$
|w_{\varepsilon,r}(0,t) - w_{\varepsilon,r}(0,0)| \leq C, \quad \textrm{for}\quad 0 \leq t \leq \frac{1}{4n + C_\star},
$$
or in other terms
$$
|u_\varepsilon(x_0,t_0+r^2t)-u_\varepsilon(x_0,t_0)|\leq Cr,\,\,\,\textrm{ for }0\leq t\leq\frac{1}{4n+C_\star}.
$$
In particular, for $r\in(0,\tau)$, one has
\begin{equation}\label{4.3}
\left|u_{\varepsilon} \left(x_0, t_0 + \frac{r^2}{4n + C_\star}\right) - u_{\varepsilon} (x_0,t_0)\right| \leq Cr.
\end{equation}
Now if $(x_0,t_0+\Delta t) \in K$ and $0 < \Delta t < \frac{r^2}{4n + C_\star}$, taking $r = \Delta t^{1/2} \sqrt{4n+C_\star}$ in \eqref{4.3} leads to
$$
|u_{\varepsilon}(x_0,t_0 + \Delta t) - u_{\varepsilon}(x_0,t_0)| \leq C\sqrt{4n+C_\star} \Delta t^{1/2}.
$$
On the order hand, if $\Delta t \geq \frac{r^2}{4n+C_\star}$, from \eqref{3.4} we get
$$
|u_{\varepsilon}(x_0,t_0 + \Delta t) - u_{\varepsilon}(x_0,t_0)| \leq 2 \Upsilon \leq \frac{2 \Upsilon}{\tau} \sqrt{4n+C_\star} \Delta t^{1/2},
$$
which completes the proof.
\end{proof}

\section{The limiting free boundary problem}\label{s5}

\hspace{0.3 cm} We start this section by letting $\varepsilon \to 0$ in \eqref{Equation Pe}. Recalling Theorem \ref{t4.1},  we know that up to a subsequence, there exists a limiting function $u$, obtained as the uniform limit of $u_{\varepsilon}$ as $\varepsilon \to 0$. We now show that $u$ is a viscosity solution of \eqref{1.2}, where $f$ is the uniform limit of $f_\varepsilon$.
\begin{theorem}\label{t5.1}
Let $\{u_{\varepsilon}\}_{\varepsilon >0}$ be a family of solution of \eqref{Equation Pe}. If $(A1)$-$(A4)$ hold then, up to a subsequence,
\begin{enumerate}
\item[(1)] $u_{\varepsilon} \to u$ locally uniformly in $\Omega_T$ and $u\in\textrm{Lip}_{\loc}(1,1/2)(\Omega_T)$;
\item[(2)] $u$ is a solution of \eqref{1.2}, where $f$ is the uniform limit of $f_\varepsilon$;
\item[(3)] the function $t \mapsto u(x,t)$ is non-decreasing in time.
\end{enumerate}
\end{theorem}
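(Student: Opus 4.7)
The plan is to prove the three items in order, leveraging the uniform regularity from Section \ref{s4} together with standard stability arguments for viscosity solutions.

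For item (1), I would exhaust $\Omega_T$ by compact sets $K_j \Subset \Omega_T$ chosen so that $\mathcal{N}_{2\tau_j}(K_j) \subset \Omega_T$ for suitable $\tau_j > 0$, and apply Theorem \ref{t4.1} on each $K_j$ to obtain the uniform bound $\|u_\varepsilon\|_{\mathrm{Lip}(1,1/2)(K_j)} \le L_j$. A diagonal extraction combined with the Arzelà--Ascoli theorem then yields a subsequence $u_{\varepsilon_k}$ converging locally uniformly in $\Omega_T$ to some $u$. Lower semicontinuity of the $\mathrm{Lip}(1,1/2)$ seminorm under uniform convergence transfers the local bounds to the limit, giving $u \in \mathrm{Lip}_{\loc}(1,1/2)(\Omega_T)$, while the nonnegativity from Theorem \ref{t3.2} is inherited pointwise.

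For item (2), the boundary identification $u = \varphi$ on $\partial_p \Omega_T$ is inherited from $u_\varepsilon = \varphi$ on the same set via uniform convergence up to the boundary, which follows from a standard barrier argument together with the $\mathrm{Lip}(1,1/2)$ estimates. To verify the PDE in the open set $\{u > 0\}$, I fix $(x_0,t_0)$ with $u(x_0,t_0) > 0$ and choose a small parabolic cylinder $Q \Subset \{u > 0\}$ on which $u \ge c > 0$. By locally uniform convergence, $u_\varepsilon \ge c/2 > \varepsilon$ on $Q$ for every sufficiently small $\varepsilon$; the support condition on $\beta_\varepsilon$ in (A2) then forces $\beta_\varepsilon(u_\varepsilon) \equiv 0$ on $Q$, so $u_\varepsilon$ is a viscosity solution of $F(x,t,D^2 u_\varepsilon) - \partial_t u_\varepsilon = f_\varepsilon$ there. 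The standard stability of viscosity solutions under locally uniform convergence of the solutions and the right-hand sides (cf. \cite{CIL92,Wang2}) then yields that $u$ is a viscosity solution of $F(x,t,D^2 u) - \partial_t u = f$ on $Q$, and the arbitrariness of $(x_0,t_0)$ closes the argument.

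For item (3), the plan is to first establish that each $u_\varepsilon$ is non-decreasing in $t$ and then pass to the limit. I would fix $h > 0$ and consider the time-shift $\tilde u_\varepsilon(x,t) := u_\varepsilon(x,t+h)$ on $\Omega \times (0, T-h)$. From $u_\varepsilon \ge 0$, $\varphi(\cdot,0) = 0$, and the time-monotonicity of $\varphi$ in (A4), one has $\tilde u_\varepsilon \ge u_\varepsilon$ on the parabolic boundary of the smaller cylinder. Inside, the favorable inequality $f_\varepsilon(x,t+h) \le f_\varepsilon(x,t)$ from (A3) together with a comparison principle adapted to the nonmonotone potential $\beta_\varepsilon$ --- handled exactly as in the iterative Perron construction of Theorem \ref{t3.1}, by writing $\beta_\varepsilon(z) = \mu z - h_\varepsilon(z)$ with $h_\varepsilon$ increasing so that the modified operator is monotone in the zero-order term --- should give $\tilde u_\varepsilon \ge u_\varepsilon$ throughout. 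Locally uniform convergence then transfers the inequality to $u$, which, upon letting $h \to 0^+$, yields the claimed monotonicity.

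The main obstacle lies squarely in item (3): handling the explicit $t$-dependence of $F$ in the comparison step. Since (A1) only supplies $C^{1,\alpha}_{\loc}$ regularity of $F$ in $t$ and no monotonicity, the discrepancy $F(x,t+h,M) - F(x,t,M)$ has no sign and must be treated as a controlled perturbation of order $O(h)$; I would absorb it via a linearly growing barrier in $t$ before letting $h \to 0$ to recover the sharp non-decreasing property.
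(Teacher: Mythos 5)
Items (1) and (2) of your proposal follow the same route as the paper (uniform $\textrm{Lip}(1,1/2)$ bounds from Theorem~\ref{t4.1}, Arzel\`a--Ascoli with diagonalization, and stability of viscosity solutions once $\beta_\varepsilon(u_\varepsilon)\equiv 0$ on a neighbourhood of a point where $u>0$), and are fine.

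Item (3) is where your route genuinely diverges from the paper's, and this is where the gap lies. The paper shifts \emph{the limit} $u$ in time, i.e.\ it compares $u_h(\cdot,t)=u(\cdot,t+h)$ with $u$, both being solutions of the free boundary problem \eqref{1.2}, in which the singular potential has already disappeared. Because $u,u_h\ge 0$ and $f\ge c_0>0$, a comparison argument at the level of the obstacle-type problem \eqref{1.2} is available, and the monotonicity $\varphi_h\ge\varphi$, $f_h\le f$ closes the argument. You instead try to shift $u_\varepsilon$ before passing to the limit, which forces you to compare two solutions of the \emph{singularly perturbed} equation $F-\partial_t u_\varepsilon=\beta_\varepsilon(u_\varepsilon)+f_\varepsilon$. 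That equation does not admit a comparison principle: $\beta_\varepsilon$ is supported in $[0,\varepsilon]$ and necessarily changes monotonicity there, so the zero-order term $-\beta_\varepsilon(u)$ is not proper. The decomposition $\beta_\varepsilon(z)=\mu z - h_\varepsilon(z)$ that you borrow from Theorem~\ref{t3.1} does not repair this: writing $-\mu w+h_\varepsilon(w)=-\beta_\varepsilon(w)$ just reproduces the original nonmonotone term. In the Perron construction that trick is used to freeze $h_\varepsilon(u_k)$ and obtain existence of a solution between fixed sub- and super-solutions; it does not yield a comparison between two solutions of the unfrozen equation. In fact, solutions of \eqref{Equation Pe} need not be unique, and monotonicity of $u_\varepsilon$ in $t$ is neither proved in the paper nor obviously true, so this step cannot be salvaged by a small perturbation.

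A secondary point: you correctly flag the explicit $t$-dependence of $F$ as a delicate issue for the time-shift comparison. Your proposed remedy --- absorbing $F(x,t+h,M)-F(x,t,M)=O(h)$ via a linear barrier and then letting $h\to 0$ --- is not as clean as stated, because the size of that discrepancy depends on $M=D^2u$, which is not a priori bounded uniformly; also, one cannot simply send $h\to 0$ in the inequality $u(\cdot,t+h)+Cht\ge u(\cdot,t)$ since $h$ is both the shift and the perturbation scale. This concern in fact also hovers over the paper's own argument (the paper writes $F_h$ and then applies comparison without elaboration), so it is good that you noticed it; but it does not rescue the more basic problem that your comparison must cope with $\beta_\varepsilon$, which the paper's route completely avoids.
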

\begin{proof}
Parts (1) and (2) follow from Theorem \ref{t4.1} and the Arzel\`a--Ascoli Theorem.  In fact, since $u_{\varepsilon}\in \textrm{Lip}_{\loc}(1,1/2)(\Omega_T)$, with a uniform estimate, we can pass to the limit (up to a subsequence) and obtain a function
$$
u(x,t) = \lim_{\varepsilon\to 0} u_{\varepsilon}(x,t),
$$
the convergence being uniform on compact subsets of $\overline{{\Omega}_{T}}$. Hence, 
$$u\in\textrm{Lip}_{\textrm{loc}}(1,1/2)(\Omega_T).$$
Moreover, $u$ is a viscosity solution of \eqref{1.2}. Indeed, if $u(x_0,t_0)=c>0$, then using the uniform convergence $u_{\varepsilon} \to u$ and the equicontinuity of $u_{\varepsilon}$, we conclude that for every small $\varepsilon$ one has, in a small neighbourhood of $(x_0,t_0)$, that $u_{\varepsilon} \ge \frac{c}{2} > \varepsilon$. So $\beta_{\varepsilon}(u_{\varepsilon}) =0$. Since $f_{\varepsilon} \to f$, invoking stability arguments (\cite{LW,Wang2}) and passing to the limit in \eqref{Equation Pe}, we conclude that $u$ is a solution of $\eqref{1.2}$.

In order to check (3), we define, for $t>0$ and $h>0$, 
$$u_h(\cdot,t)  :=u(\cdot,t+h); \quad \ f_{h}(\cdot,t)  := f(\cdot,t+h); \quad \ \varphi_h(\cdot,t) := \varphi(\cdot,t+h) $$
and $F_h(\cdot,t,\cdot):=F(\cdot,t+h,\cdot)$.
Set also $\varphi_h(x,0):=\varphi(x,0)=0$. Since $u$ is a solution of \eqref{1.2}, then $u_h$ is a solution of the same problem with $F=F_h$, $f=f_h$ and $\varphi=\varphi_h$. From $(A4)$ we know that $\varphi$ is non-decreasing in $t$ and $\varphi(x,0)=0$, therefore  $u_{h} \ge u$ on $\partial_p \Omega_T$. Observe that $(A3)$ provides $f_h(x,t)\le f(x,t)$. Since also $u \ge 0$, we can apply a comparison argument to verify that $u_{h} \ge u$ in $\Omega_T$, so the function $t\mapsto u(x,t)$ is non-decreasing.
\end{proof}

\section{Porosity of the free boundary}\label{s6}

In this section we establish the exact growth of the solution near the free boundary, from which we deduce the porosity of its time level sets.

\begin{definition}\label{d5.1}
A set $E\subset\mathbb{R}^n$ is called porous with porosity $\delta>0$, if there exists $R>0$ such that
$$
   \forall x\in E, \,\,\,\forall r\in(0,R),\,\,\,\exists y\in\mathbb{R}^n\,\textrm{ such that }\,B_{\delta r}(y)\subset B_r(x)\setminus E.
$$
\end{definition}
A porous set of porosity $\delta$ has Hausdorff dimension not exceeding $n-c\delta^n$, where $c=c(n)>0$ is a constant depending only on $n$. In particular, a porous set has Lebesgue measure zero.

The following theorem is the main result of this section.

\begin{theorem}\label{t6.1}
Let $u$ be a solution of \eqref{1.2}. If (A1) holds and $f$ satisfies (A3) then, for every compact set $K \subset \Omega_T$ and every $t_0 \in (0,T)$, the set 
$$\partial\{u > 0\}\cap K \cap\{t=t_0\}$$ 
is porous in $\mathbb{R}^n$, with porosity depending only on $\Upsilon$ and $\textrm{dist}(K,\partial_p \Omega_T)$.
\end{theorem}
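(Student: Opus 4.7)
The plan is to deduce porosity directly from the two preceding results of the section, namely the non-degeneracy lemma (Lemma \ref{l6.1}) and the linear growth estimate near the free boundary (Lemma \ref{l6.3}), via the standard argument used for this kind of free boundary problem. Fix a compact set $K \subset \Omega_T$, set $d_0 := \textrm{dist}(K,\partial_p \Omega_T) > 0$, and choose $R > 0$ depending only on $d_0$ so that $Q_{2R}(x_0,t_0) \Subset \Omega_T$ for every $(x_0,t_0) \in K$. Fix $t_0 \in (0,T)$ and set
$$
E \;:=\; \partial\{u>0\} \cap K \cap \{t=t_0\},
$$
regarded as a subset of $\mathbb{R}^n$.

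Given $x_0 \in E$ and $r \in (0,R)$, I would first apply Lemma \ref{l6.1} at the time slice $t = t_0$ to obtain a universal constant $c_* > 0$ (depending only on $\Upsilon$, $d_0$ and the ellipticity) and a point $y \in \overline{B_{r/2}(x_0)}$ with $u(y,t_0) \geq c_* r$. In particular $y$ lies in $\{x:u(x,t_0) > 0\}$, so its distance to the zero set of $u(\cdot,t_0)$, call it $d$, is strictly positive. Lemma \ref{l6.3}, read at the same time slice, then yields a universal constant $C_* > 0$ with $u(y,t_0) \leq C_* d$. Combining the two bounds,
$$
d \;\geq\; \frac{c_*}{C_*}\, r.
$$

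Now set $\delta := \min\{c_*/(2C_*),1/4\}$. The ball $B_{\delta r}(y)$ then sits entirely inside $\{x:u(x,t_0)>0\}$ and is therefore disjoint from $E$; since $y \in \overline{B_{r/2}(x_0)}$ and $\delta r < r/2$, it is also contained in $B_r(x_0)$. This establishes
$$
B_{\delta r}(y) \;\subset\; B_r(x_0)\setminus E,
$$
which is exactly the porosity condition of Definition \ref{d5.1}, with porosity $\delta$ and scale $R$ depending only on $\Upsilon$ and $d_0 = \textrm{dist}(K,\partial_p \Omega_T)$.

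The main delicate point, I expect, lies not in the porosity argument itself but in reading off the two ingredients on the fixed time level $t_0$. Because the parabolic set $\partial\{u>0\}$ may in principle contain horizontal pieces that are not part of the spatial boundary $\partial_x\{u(\cdot,t_0)>0\}$, the non-decreasing-in-time property in part (3) of Theorem \ref{t5.1} has to be used carefully: first, so that the non-degeneracy bound from Lemma \ref{l6.1} is realised on the slice $t=t_0$ (rather than only in a forward-in-time parabolic cylinder), and second, so that the linear growth rate of Lemma \ref{l6.3} genuinely bounds $u(y,t_0)$ by the distance to the zero set of $u(\cdot,t_0)$ within $\mathbb{R}^n$. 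Once those two lemmas are in this time-slice form, the proof reduces to the elementary geometric argument above.
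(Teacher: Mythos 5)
Your proposal follows the same strategy as the paper: apply the non-degeneracy estimate (Lemma \ref{l6.1}) together with the monotonicity $\partial_t u\geq 0$ to produce a point $y$ near $x_0$ at the slice $t=t_0$ where $u$ is bounded away from zero, then use the growth estimate (Lemma \ref{l6.3}) to conclude that $y$ sits a definite distance inside the positivity set, and finally run the elementary ball-inclusion argument. The one genuine simplification you make is applying the non-degeneracy lemma at radius $r/2$, which places $y$ inside $B_{r/2}(x_0)$ from the start; the paper instead finds a touching point $x_1\in\partial B_r(z)$ and must then pull the small ball inward along the segment $[z,x_1]$. Your version avoids that last step at the cost of a factor in the constant, which is harmless.

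You should, however, correct the stated exponents. Lemma \ref{l6.1} gives $\sup_{\partial_p Q_r^-}u\geq \mu_0 r^2$ and Lemma \ref{l6.3} gives $u\leq C_0\,d(x,t)^2$; both bounds are \emph{quadratic} in the radius, not linear as you wrote ($u(y,t_0)\geq c_*r$ and $u(y,t_0)\leq C_*d$). Since the two exponents match, the final inequality $d\geq \delta r$ with $\delta=\sqrt{\mu_0/(4\kappa C_0)}$ (after normalising $u/\kappa\in\Theta$, $\kappa=\max\{1,c_1,\Upsilon\}$) still holds, so the argument goes through unchanged once the exponents are fixed. Two small points you glossed over, both handled explicitly in the paper: (i) Lemma \ref{l6.3} controls $u$ by the \emph{parabolic} distance $d(x,t)$, and you need the observation $Q_{d}(y,t_0)\subset\{u>0\}\Rightarrow B_d(y)\times\{t_0\}\subset\{u>0\}$ to pass to the spatial distance on the slice; (ii) before invoking Lemma \ref{l6.3} you must translate and rescale so that the function belongs to $\Theta$ and vanishes at the origin, which is where the dependence on $\Upsilon$ and $c_1$ enters the constant. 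Your remark about using $\partial_t u\geq 0$ to read both lemmas on the time slice $t=t_0$ is exactly the right caution and is precisely how the paper obtains $u(x_1,\tau)\geq \mu_0 r^2$ from the parabolic-boundary supremum.
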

To prove the theorem we need to prove some auxiliary results.
\subsection{Non-degeneracy}
We start by proving a non-degeneracy result. Let us remark that, without loss of generality, we may consider in what follows the domain $Q_1=Q_1(0,0)$ instead of $Q_1(z,s)$.

\begin{lemma}\label{l6.1}
Let $u \in C(Q_1)$ be a solution of
$$
F(x,t , D^{2} u) - \partial_t u = f \quad \textrm{in} \quad \{u>0\},
$$
with $f$ satisfying the lower bound in (A3). Then for every $(z,s) \in \overline{\{u>0\}}$ and $r >0$ with $Q_r(z,s) \subset Q_1$ we have
$$
\sup_{(x,t) \in \partial_p Q^{-}_{r}(z,s)} u(x,t) \ge \mu_0 r^{2} + u(z,s),
$$
where $\mu_0 = \min \left(\frac{c_0}{2}, \frac{c_0}{4n \Lambda}\right)$.
\end{lemma}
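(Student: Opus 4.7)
The plan is to construct a parabolic barrier that touches $u$ from below at $(z,s)$ and then derive the inequality via a comparison argument on the subdomain $D := Q_r^-(z,s) \cap \{u > 0\}$. Concretely, I would set
$$
\phi(x,t) := u(z,s) + \mu_0\bigl(|x-z|^2 + 2n\Lambda(s-t)\bigr),
$$
with $\mu_0 = c_0/(4n\Lambda)$. Then $\phi(z,s) = u(z,s)$, $\phi \ge 0$ on $\overline{Q_r^-(z,s)}$ since $t \le s$ there, and a direct computation with $D^2\phi = 2\mu_0 I$, $\partial_t\phi = -2n\Lambda\mu_0$, combined with the upper bound from \eqref{Pucci}
$$
F(x,t,2\mu_0 I) \le \mathcal{P}^{+}_{\frac{\lambda}{n},\Lambda}(2\mu_0 I) = 2n\Lambda\mu_0,
$$
yields $F(x,t,D^2\phi) - \partial_t\phi \le 4n\Lambda\mu_0 = c_0 \le f$. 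Hence $\phi$ is a classical (and therefore viscosity) subsolution of the PDE satisfied by $u$ in $\{u>0\}$.

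Next, I would put $\eta := u - \phi$ and invoke \eqref{Pucci} again to conclude that, in the viscosity sense on $D$,
$$
\mathcal{P}^{+}_{\frac{\lambda}{n},\Lambda}(D^2\eta) - \partial_t\eta \ge \bigl[F(x,t,D^2u)-\partial_t u\bigr] - \bigl[F(x,t,D^2\phi)-\partial_t\phi\bigr] \ge f - c_0 \ge 0.
$$
Thus $\eta$ is a viscosity subsolution of a Pucci equation on $D$, and since $\eta(z,s)=0$, the parabolic maximum principle (applicable via the ABP estimate employed in the proof of Theorem~\ref{t3.2}) yields $\sup_{\partial_p D}\eta \ge 0$. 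Here I use crucially that $(z,s)$ is interior to $Q_r^-(z,s)$ with respect to the parabolic boundary, since the top face $B_r(z)\times\{s\}$ is excluded from $\partial_p Q_r^-(z,s)$.

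Finally, I would examine $\partial_p D$: it splits into a piece on the free boundary $\partial\{u>0\}\cap Q_r^-(z,s)$ and a piece on $\partial_p Q_r^-(z,s)\cap\overline{\{u>0\}}$. On the free-boundary piece $u=0$ while $\phi\ge 0$, so $\eta\le 0$; the nonnegative supremum must therefore be realised at some $(x^\ast,t^\ast)\in\partial_p Q_r^-(z,s)$. On the lateral face $|x^\ast-z|=r$ one has $\phi(x^\ast,t^\ast)\ge u(z,s)+\mu_0 r^2$, while on the bottom face $s-t^\ast=r^2$ one has $\phi(x^\ast,t^\ast)\ge u(z,s)+2n\Lambda\mu_0 r^2$. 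In either case $u(x^\ast,t^\ast)\ge\phi(x^\ast,t^\ast)\ge u(z,s)+\mu_0\min(1,2n\Lambda)\,r^2$, and $\mu_0\min(1,2n\Lambda)=\min\bigl(c_0/(4n\Lambda),c_0/2\bigr)$ is precisely the constant appearing in the statement.

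The main obstacle I anticipate is justifying the parabolic comparison step on the irregular subdomain $D$, whose boundary consists of a smooth piece on $\partial_p Q_r^-(z,s)$ and an \emph{a priori} rough piece on the free boundary, where the PDE for $u$ is not assumed to hold. The cleanest way around this is to view $\eta$ as a viscosity subsolution of a single Pucci equation on all of $D$ and apply the parabolic maximum principle for the corresponding $\mathscr{S}$-class; the case $u(z,s)=0$, i.e.\ $(z,s)$ on the free boundary, requires no separate argument because $\phi$ still vanishes at $(z,s)$ and remains nonnegative on the free boundary, so the dichotomy applies verbatim.
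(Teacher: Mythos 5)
Your proof is essentially the same as the paper's. Your barrier $\phi$ is identical to the paper's: writing out $\mu_0 = c_0/(4n\Lambda)$, one sees $\phi(x,t) = u(z,s) + \frac{c_0}{4n\Lambda}|x-z|^2 - \frac{c_0}{2}(t-s) = u(z,s) + \psi(x,t)$, which is exactly the comparison function $\psi$ that the paper compares against $\omega_\delta := u - (1-\delta)u(z,s)$. The two approaches differ only in how the comparison step is packaged: the paper runs a proof by contradiction on $\omega_\delta$ vs.\ $\psi$, with the $\delta>0$ perturbation ensuring the strict inequality $\omega_\delta(z,s)>\psi(z,s)$ that rules out the supremum being taken on the free-boundary piece; you instead recast $\eta = u - \phi$ as an element of $\underline{\mathscr{S}}(\lambda/n,\Lambda,0)$ and invoke the maximum principle directly.

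One imprecision: your closing claim that the case $u(z,s)=0$ ``requires no separate argument'' is not quite right. When $u(z,s)=0$, the point $(z,s)$ itself is a free-boundary point where $\eta(z,s)=\phi(z,s)=0$, so the dichotomy ``$\eta<0$ strictly on the free-boundary piece'' fails exactly there, and the nonnegative supremum of $\eta$ over $\partial_p D$ may be attained at $(z,s)$, giving no information about $\partial_p Q_r^-(z,s)$. The standard fix is to first prove the inequality for $(z,s)\in\{u>0\}$ and then extend to $\overline{\{u>0\}}$ by approximating from inside and using the continuity of $u$; the paper's $\delta$-trick has the same limitation for the same reason (it yields $\omega_\delta(z,s)=\delta u(z,s)=0$ when $u(z,s)=0$), so the paper is also implicitly relying on this approximation. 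This is a cosmetic gap, not a flaw in the idea, and the constant bookkeeping — $\mu_0\min(1,2n\Lambda)=\min\bigl(c_0/(4n\Lambda),c_0/2\bigr)$ — is handled correctly.
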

\begin{proof}
Suppose that $(z,s) \in\{u>0\}$, and, for small $\delta>0$, set
$$
\omega_{\delta}(x,t):= u(x,t) - (1-\delta)u(z,s)\,\,\textrm{ and }\,
\psi(x,t):= \frac{c_0}{4n \Lambda}|x-z|^{2} - \frac{c_0}{2}(t-s).
$$
Since $D_{ij} \psi = \frac{c_0}{2n \Lambda}\delta_{ij}$ then, from \eqref{Pucci}, one has
\begin{eqnarray*}
F(x,t, D^2 \psi) - \partial_t \psi &\le&   \mathcal{P}^{+}_{\frac{\lambda}{n}, \Lambda}(D^2 \psi) -\partial_t \psi  \\
&=& \Lambda \sum_{e_i >0} e_i + \frac{\lambda}{n} \sum_{e_i <0} e_i  + \frac{c_0}{2} \\
&=&  \Lambda \frac{n c_0}{2n \Lambda} + \frac{c_0}{2}=c_0\\
	&\le&  f(x,t)  =  F(x,t, D^2 u) - \partial_t u\\
	&=&  F(x,t, D^2 \omega_{\delta})-\partial_t \omega_{\delta}.
\end{eqnarray*}
Moreover, $\omega_{\delta} \le \psi$ on $\partial\{u>0\}\cap Q^{-}_{r}(z,s)$. Note that we can not have
$$
\omega_{\delta} \le \psi \quad \textrm{on} \quad \partial_{p}Q^{-}_{r}(z,s)\cap\{u>0\},
$$
because otherwise we could apply the comparison principle  to obtain
$$
\omega_{\delta} \le \psi \quad \textrm{in} \quad Q^{-}_{r}(z,s)\cap\{u>0\},
$$
which contradicts the fact that $\omega_{\delta}(z,s) = \delta u(z,s) >0 = \psi(z,s)$. Hence, for $(y,\tau) \in \partial_{p}Q^{-}_{r}(z,s)$ we must have
$$
\omega_{\delta}(y, \tau) > \psi(y,\tau) = \mu_0 r^{2}.
$$
Letting $\delta \to 0$ in the last inequality we conclude the proof.
\end{proof}

\subsection{A class of functions in the unit cylinder}

Next, we establish the growth rate of the solution near the free boundary, which is known for $p$-parabolic variational problems (see \cite{Sha}) but is new in the fully nonlinear framework. We start by introducing a class of functions.
\begin{definition}
We say that a function $u \in C(Q_1)$ is in the class $\Theta$ if $0\le u\le1$ in $Q_1$, 
$\| F(x,t,D^2 u)-\partial_t u\|_{\infty} \le 1$ in $Q_1$,
in the viscosity sense and, moreover, $ \partial_t u\ge0$ and $u(0,0)=0$.
\end{definition}
\noindent Note that the last two conditions make sense due to the regularity of $u$ guaranteed by the first two (\cite{Wang, Wang2}).

In order to proceed, we need to introduce some notation. Set
$$
S(r,u,z,s):=\displaystyle \sup_{Q^{-}_{r}(z,s)}u.
$$
For $u\in \Theta$, we define
$$
H(u,z,s):= \left\{ j \in \mathbb{N} \cup \{0\} : S(2^{-j},u,z,s) \le M  S(2^{-j-1},u,z,s)\right\},
$$
where $M:=4\max(1,\frac{1}{\mu_0})$, with $\mu_0$ as in Lemma \ref{l6.1}. When $(z,s)$ is the origin, we suppress the point dependence.

The following lemma is the main step towards the growth control of the solution near the free boundary.
\begin{lemma}\label{l6.2}
If $u\in \Theta$, then there is a constant $C_1 = C_1(n,c_1)>0$ such that
$$
S(2^{-j-1},u) \le C_12^{-2j},\,\,\,\,\forall j\in H(u).
$$
\end{lemma}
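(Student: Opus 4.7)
I will argue by contradiction via a compactness/blow-up scheme. Suppose the conclusion fails; then for each integer $k \ge 1$ there exist $u_k \in \Theta$ and $j_k \in H(u_k)$ with
$$
S_k := S(2^{-j_k-1}, u_k) > k \cdot 2^{-2 j_k} = 4 k \, r_k^2, \qquad r_k := 2^{-j_k-1}.
$$
Rescale parabolically by setting $v_k(x,t) := S_k^{-1}\, u_k(r_k x, r_k^2 t)$. A direct change of variables then yields: (i) $v_k \ge 0$, $\partial_t v_k \ge 0$, $v_k(0,0)=0$; (ii) $\sup_{Q_1^-} v_k = S_k/S_k = 1$; (iii) since $2 r_k = 2^{-j_k}$ and $j_k \in H(u_k)$, one has $\sup_{Q_2^-} v_k = S(2^{-j_k},u_k)/S_k \le M$; and (iv) $v_k$ satisfies, in the viscosity sense,
$$
\tilde F_k(x,t, D^2 v_k) - \partial_t v_k = \tilde f_k(x,t),
$$
where $\tilde F_k(x,t, N) := (r_k^2/S_k)\, F(r_k x, r_k^2 t, (S_k/r_k^2)N)$ is $(\lambda,\Lambda)$-uniformly elliptic with $\tilde F_k(\cdot,\cdot,0) = 0$, and $\|\tilde f_k\|_\infty \le r_k^2/S_k < 1/(4k) \to 0$.

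The crucial observation is that, thanks to the $1$-homogeneity of $\mathcal{P}^\pm$ combined with \eqref{Pucci} (taking $B=0$), one has $\mathcal{P}^-_{\lambda/n,\Lambda}(N) \le \tilde F_k(x,t,N) \le \mathcal{P}^+_{\lambda/n,\Lambda}(N)$, and therefore
$$
\mathcal{P}^-_{\frac{\lambda}{n},\Lambda}(D^2 v_k) - \partial_t v_k \le \tilde f_k \le \mathcal{P}^+_{\frac{\lambda}{n},\Lambda}(D^2 v_k) - \partial_t v_k
$$
in the viscosity sense, so $v_k \in \mathscr{S}(\lambda/n, \Lambda, \tilde f_k)$. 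Combined with the uniform bound $v_k \le M$ on $Q_2^-$ from (iii), the interior parabolic H\"older estimate for this class (\cite[Theorem 4.19]{Wang}, \cite[Theorem 2.5]{Wang2}) provides a uniform $C^\alpha$ bound for $\{v_k\}$ on the compactly contained $\overline{Q_1^-} \Subset Q_2^-$. Arzel\`a--Ascoli then extracts a subsequence converging \emph{uniformly} on $\overline{Q_1^-}$ to a limit $v_\infty$, and standard stability of viscosity inequalities transfers the $\mathscr{S}$-membership to the limit: $v_\infty \in \mathscr{S}(\lambda/n, \Lambda, 0)$, with $v_\infty \ge 0$, $v_\infty(0,0) = 0$, and $\sup_{\overline{Q_1^-}} v_\infty = 1$.

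To close the argument, I invoke the strong minimum principle for non-negative viscosity supersolutions of $\mathcal{P}^-_{\lambda/n,\Lambda} - \partial_t = 0$: since $v_\infty \in \overline{\mathscr{S}}(\lambda/n, \Lambda, 0)$ is non-negative in $Q_1^-$ and vanishes at the interior (in the parabolic sense) point $(0,0)$ lying on the top face of the cylinder, it must vanish identically on $Q_1^-$; this flatly contradicts $\sup_{Q_1^-} v_\infty = 1$, proving the lemma. The two points I expect to require the most care are, first, verifying that the rescaled operators $\tilde F_k$ indeed produce a vanishing Pucci right-hand side (this is where the $1$-homogeneity of $\mathcal{P}^\pm$ and the normalization $F(\cdot,\cdot,0)=0$ from (A1) are essential), and, second, justifying uniform equicontinuity of $\{v_k\}$ \emph{up to} the top face $\{t=0\}$ of $Q_1^-$ so that the supremum and the vanishing at $(0,0)$ both pass to the limit; this latter point is handled by applying the interior H\"older estimate on the strictly larger cylinder $Q_2^-$, whose parabolic interior contains $\{t=0\}\cap \overline{B_1}$.
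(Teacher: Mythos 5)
Your proof is correct and follows essentially the same strategy as the paper's: assume the estimate fails along a sequence $(u_k, j_k)$, perform the natural parabolic rescaling using the normalization $S(2^{-j_k-1},u_k)$, invoke interior H\"older compactness to extract a uniformly convergent subsequence, and derive a contradiction from the strong maximum (minimum) principle applied to the limit. Two technical refinements you make are worth noting. First, instead of asserting (as the paper does) that the rescaled operators $F_k$ converge to a limiting $(\lambda,\Lambda)$-elliptic operator $\mathcal{F}$ -- which strictly speaking requires an extra compactness argument for the operator sequence -- you sandwich the rescaled equation between Pucci extremal operators, exploiting the $1$-homogeneity of $\mathcal{P}^\pm$ and the normalization $F(\cdot,\cdot,0)=0$ from (A1), so the limit lies in $\mathscr{S}(\lambda/n,\Lambda,0)$ and no limiting operator is needed. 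Second, by scaling at radius $r_k=2^{-j_k-1}$ rather than $2^{-j_k}$, you place the unit cylinder $\overline{Q_1^-}$ (where the normalization $\sup v_k=1$ and $v_k(0,0)=0$ live) compactly inside the parabolic interior of $Q_2^-$, where the equicontinuity estimate holds; this makes the passage to the limit of both the supremum and the zero at the top face $(0,0)$ completely transparent, avoiding the paper's step-down to $Q^-_{1/2}$. Incidentally, you correctly obtain the bound $\sup_{Q_2^-} v_k \le M$, whereas the paper writes $0\le v_k\le 1$ on $Q_1^-$, which appears to be a harmless misstatement of the same $\le M$ bound. Your argument is complete and sound.
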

\begin{proof}
First, note that $H(u)\not=\emptyset$ because $0\in H(u)$. Indeed, using Lemma \ref{l6.1}, we have
$$
S(1,u)\leq1=4\left(\frac{1}{\mu_0}\right)\mu_02^{-2}\leq4\left(\frac{1}{\mu_0}\right)S(2^{-1},u)\leq MS(2^{-1},u).
$$
Next, suppose the conclusion of the lemma fails. Then, for every $k \in \mathbb{N}$, there is $u_k \in \Theta$ and $j_k \in H(u_k)$ such that
$$
S(2^{-j_k-1},u_k) \ge k 2^{-2j_k}.
$$
Define $v_k:Q_1 \rightarrow \mathbb{R}$ by
$$
v_k(x,t):= \frac{u(2^{-j_k}x, 2^{-2j_k}t)}{S(2^{-j_k-1},u_k)}.
$$
One easily verifies that
$$0 \le v_k \le 1 \quad \textrm{in} \ Q^{-}_1; \qquad \| F_k(x,t, D^2 v_k)-\partial_t v_k\|_{\infty} \le \frac{c_1}{k}; $$
$$\sup_{Q^{-}_{1/2}}v_k =1;\qquad v_k(0,0)=0; \qquad\partial_t v_k \ge 0 \quad \textrm{in} \ Q_1^-,$$
where
$$
F_k(x,t, M):= \frac{2^{-2j_k}}{S(2^{-j_k-1},u_k)} F \left(2^{-j_k}x,(2^{-j_k})^2 t,\frac{S(2^{-j_k-1},u_k)}{2^{-2j_k}} M\right)
$$
is a uniform $(\lambda, \Lambda)$-elliptic operator. Using compactness arguments (see \cite{Wang,Wang2}), we infer that there is a subsequence of $v_k$ converging locally uniformly in $Q^{-}_{1}$ to a function $v$. Moreover,
$$
\mathcal{F}(x,t,D^2 v) - \partial_t v =0, \quad v(0,0)=0, \quad v \ge 0, \quad \partial_t v \ge 0
$$
in $Q^{-}_{1}$ for some $(\lambda,\Lambda)$-elliptic operator $\mathcal{F}$. The strong maximum principle (see \cite{LI04}) then implies that $v\equiv0$, which contradicts  the fact that
$$\sup_{Q^{-}_{1/2}}v =1.$$
\end{proof}
We are now ready to prove the growth control of the solution near the free boundary.
\begin{lemma} \label{l6.3}
If $u\in \Theta$, then there is a constant $C_0 = C_0(n,L,c_1)>0$ such that
$$
	|u(x,t)| \le C_0 (d(x,t))^2, \quad \forall \,\, (x,t) \in Q_{1/2},
$$
where
$$
d(x,t):=
\begin{cases}
  \sup \left\{ r : Q_r(x,t)\subset\{u>0\}\right\}, & \mbox{if } (x,t)\in\{u>0\} \\
  0, & \mbox{otherwise}.
\end{cases}
$$
\end{lemma}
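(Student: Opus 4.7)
The plan is to upgrade Lemma~\ref{l6.2} to all dyadic scales and then apply it at a zero point carefully chosen near $(x_0,t_0)$.

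For the upgrade, recall from the proof of Lemma~\ref{l6.2} that $0\in H(u,z,s)$ and that $M=4\max(1,1/\mu_0)\ge 4$. Given any $j\ge 0$, I pick the largest $k\le j$ with $k\in H(u,z,s)$; for each $m\in\{k+1,\dots,j\}$ the relation $m\notin H$ reads $S(2^{-m-1},u,z,s)<S(2^{-m},u,z,s)/M$, so iterating from $k+1$ up to $j+1$ together with the base case $S(2^{-k-1},u,z,s)\le C_1 2^{-2k}$ yields
\[
S(2^{-j-1},u,z,s)\le\frac{C_1\,2^{-2k}}{M^{j-k}}=4C_1\Big(\frac{4}{M}\Big)^{j-k}2^{-2(j+1)}\le 4C_1\,2^{-2(j+1)}.
\]
Monotonicity of $S$ in $r$ then extends this to $\sup_{Q^{-}_r(z,s)}u\le\bar C\,r^2$ for all small $r$ and any zero point $(z,s)$ of $u\in\Theta$. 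Since $\partial_t u\ge 0$, this backward sup is attained at the top time, $\sup_{Q^{-}_r(z,s)}u=\sup_{B_r(z)}u(\cdot,s)$, which is the form I shall use.

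Next, fix $(x_0,t_0)\in Q_{1/2}$ with $u(x_0,t_0)>0$ (otherwise the bound is trivial), set $d=d(x_0,t_0)>0$, and pick by compactness a touching zero $(z,s)\in\overline{Q_d(x_0,t_0)}\cap\{u=0\}$. Using $\partial_t u\ge 0$ together with $Q_d\subset\{u>0\}$: the top face $s=t_0+d^2$ is ruled out (else $u(z,t_0)=0$ would contradict $(z,t_0)\in Q_d\subset\{u>0\}$), and any interior spatial $z\in B_d(x_0)$ forces $s=t_0-d^2$. If some touching zero has $s\ge t_0$, then necessarily $z\in\partial B_d(x_0)$ and monotonicity promotes $(z,t_0)$ to a zero, so the first step applied at $(z,t_0)$ with radius $2d$ gives $u(x_0,t_0)\le\sup_{B_{2d}(z)}u(\cdot,t_0)\le 4\bar C\,d^2$, as required. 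If instead every touching zero has $s<t_0$, I pick one $(z,\tau)$ with $\tau\in[t_0-d^2,t_0)$, and the first step at $(z,\tau)$ already yields $u(\cdot,\tau)\le 4\bar C\,d^2$ on $B_d(x_0)$; this information must then be carried forward to time $t_0$ across the gap $t_0-\tau\le d^2$.

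The delicate step, and what I expect to be the main obstacle, is exactly this forward-in-time propagation. Unlike the elliptic counterpart treated in \cite{RT11}, where Harnack iteration across level sets is available, the monotonicity $\partial_t u\ge 0$ controls $u$ only \emph{backwards} from a zero, whereas $(x_0,t_0)$ lies in the future of $(z,\tau)$. My plan is to rescale by $\tilde u(y,\sigma):=u(x_0+dy,\tau+d^2\sigma)/d^2$, so that $\tilde u$ solves a uniformly elliptic fully nonlinear equation with universally bounded source on $B_1\times[0,1]$ and satisfies $\tilde u(\cdot,0)\le 4\bar C$ on $B_1$, and then to bound $\tilde u(0,(t_0-\tau)/d^2)$ by comparison with a supersolution whose lateral values on $\partial B_1\times[0,1]$ are dominated via the spatial Lipschitz estimate of Proposition~\ref{p4.1}. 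The subtlety is that on the lateral boundary the only a priori bound is $\tilde u\le 1/d^2$, so a naive maximum principle would destroy the parabolic scaling; the barrier must be chosen so that the Lipschitz control furnished by Proposition~\ref{p4.1} is just enough to preserve the $O(d^2)$ estimate all the way up to time $t_0$.
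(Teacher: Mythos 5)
Your upgrade of Lemma~\ref{l6.2} to all dyadic scales is correct and amounts to the same content as the paper's first step, which proves \eqref{6.1} by picking the first scale where the bound fails and reading off $j-1\in H(u)$; your forward iteration from the largest $k\le j$ in $H(u)$ is an equivalent packaging. (One small caveat: Lemma~\ref{l6.2} is stated only at the origin, since $u\in\Theta$ forces $u(0,0)=0$; your use of an arbitrary zero point $(z,s)$ as the base of the iteration is not what the lemma literally gives, though it would follow by the same compactness argument. The paper sidesteps this by proving the growth estimate only at the origin and deferring the recentering to the proof of Theorem~\ref{t6.1}.)

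The genuine gap is the step you yourself flag as ``delicate'': the forward-in-time propagation. You correctly observe that monotonicity in $t$ only propagates the zero set backwards, so the dyadic estimate gives $\sup_{Q_r^-}u\le\bar C r^2$ and nothing above the time slice of a zero. Your proposed fix --- rescale to $\tilde u(y,\sigma)=u(x_0+dy,\tau+d^2\sigma)/d^2$ and compare on $B_1\times[0,1]$ --- fails for exactly the reason you identify: the only lateral bound is $\tilde u\le 1/d^2$, which the parabolic scaling has destroyed, and the spatial Lipschitz estimate $|\nabla\tilde u|\le L/d$ blows up at the same rate, so it cannot repair the boundary control. The proposal therefore stops short of a proof.

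The paper resolves this without rescaling at all. Working directly with $u$ at the unit scale, it compares $u$ on $Q_1^+=B_1\times(0,1)$ against the barrier $\omega(x,t)=A_1|x|^2+A_2t$ with $A_2=2\Lambda nA_1$, which is a supersolution of the Pucci-majorised equation. On the lateral boundary $\partial B_1\times[0,1]$ the barrier satisfies $\omega\ge A_1\ge 1\ge u$ using only the $L^\infty$ normalisation $u\le 1$ from $\Theta$; on the bottom face $\{t=0\}$ it satisfies $\omega\ge u$ because the backward dyadic estimate \eqref{6.2} gives $u(x,0)\le 16C_1|x|^2$. Comparison then yields $u\le\omega$ on all of $Q_1^+$, hence $\sup_{Q_r^+}u\le(A_1+A_2)r^2$ for all $r<1$: the quadratic decay forward in time comes for free from the slope $A_2$ of the barrier, with no division by $d^2$ anywhere. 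This fixed-scale barrier is the missing ingredient; once you have it, the case analysis over positions of the touching zero also becomes unnecessary, since the one estimate $\sup_{Q_r(0,0)}u\le C_0 r^2$ at the canonical free boundary point is all that is ever invoked.
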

\begin{proof}
It suffices to show that 
\begin{equation}\label{6.1}
S(2^{-j},u)\leq 4C_12^{-2j},\,\,\,\forall j\in\mathbb{N}.
\end{equation}
In fact, for a fixed $r\in(0,1)$, by choosing $j\in\mathbb{N}$ such that $2^{-j-1}\leq r\leq 2^{-j}$, one has
\begin{equation}\label{6.2}
\sup_{Q_r^-(0,0)}u\leq\sup_{Q_{2^{-j}}^-}u\leq 4C_12^{-2j}
=16C_12^{-2j-2}\leq16C_1r^2.
\end{equation}
In order to prove \eqref{6.1}, let us take the first $j$ for which it fails (if there is no such $j$, we are done). Then
$$
S(2^{-(j-1)},u)\le 4C_1 2^{-2(j-1)} < 4S(2^{-j},u) \le M S(2^{-j},u),
$$
so $j-1 \in H(u)$, and we can apply Lemma \ref{l6.2} to reach the contradiction
$$
S(2^{-j},u)\le C_1 2^{-2(j-1)} = 4C_12^{-2j}.
$$

To obtain a similar estimate for $u$ over the whole cylinder (and not only over its lower half) we use a barrier from above. Set
$$
\omega(x,t):= A_1|x|^2 + A_2t,
$$
where $A_2 = 2\Lambda n A_1$ and $A_1 >0$. Then in $Q^{+}_{1} = B_{1}(0) \times (0,1)$ one gets from \eqref{Pucci} that
\begin{eqnarray*}
	F(x,t, D^2 \omega) - \partial_t \omega &\le& \mathcal{P}^{+}_{\frac{\lambda}{n}, \Lambda}(D^2 \omega) - \partial_t \omega \\
	&=& \Lambda \sum_{e_i >0} e_i + \frac{\lambda}{n} \sum_{e_i<0} e_i - A_2\\
	&=& 2n \Lambda A_1 - A_2 = 0\le F(x,t, D^2 u) - \partial_t u.
\end{eqnarray*}
If $A_1$ is large enough, then $\omega \ge u$ on $\partial_p Q^{+}_{1}$, where for the estimate on $\{t=0\}$ we used $S(r,u)\le 16C_1r^2$ from \eqref{6.2}. Hence, by the comparison principle one has $\omega \ge u$ in $Q^{+}_{1}$. Therefore
$$
\sup_{Q_r(0,0)} u \le C_0 r^{2},
$$
for a constant $C_0>0$.
\end{proof}

\subsection{Porosity of the free boundary in time levels}

We close the paper by proving Theorem \ref{t6.1}.
\begin{proof}
Without loss of generality, we assume that $K$ is the closed unit cylinder $\overline{Q}_1$, and $\overline{Q}_{2} \subset \Omega_T$. For $(x,t) \in \{u>0\} \cap \overline{Q}_{1}$, let $d(x,t)$ be as in Lemma \ref{l6.3} and take $(x_0,t_0) \in \partial\{u>0\} \cap \overline{Q}_{1}$ to be the point where the distance is attained. Define
$$
v(y,s):= u(x_0+y,t_0+s), \quad \textrm{for} \,\,\, (y,s) \in Q_1.
$$
We have
$$
\|F(x,t, D^2 v) - \partial_t v\|_{\infty} \le c_1, \quad 0 \le v \le\Upsilon, \quad v(0,0)=0,
$$
hence, if $\kappa=\max\{1,c_1,\Upsilon\}$, then
$(1/\kappa)v(y,s) \in \Theta$. Lemma \ref{l6.3} then provides \begin{equation}\label{6.3}
u(x,t)=v(x-x_0, t-t_0) \le  \kappa C_0 (d(x,t))^2.
\end{equation}
Now if $(z,\tau)\in\partial\{u>0\} \cap \overline{Q}_{1}$, then for $r\in(0,1)$, using Lemma \ref{l6.1}, and the fact that $\partial_tu\geq0$ in $Q_1$, one concludes that there exists $x_1\in\partial B_r(z)$, such that
$$
u(x_1,\tau)\ge\mu_0 r^2.
$$
Together with \eqref{6.3}, we have
$$
\mu_0 r^2 \le u(x_1,\tau) \le \kappa C_0 (d(x_1,\tau))^2,
$$
which implies that
$$
d(x_1,\tau) \ge \delta r, \quad \delta = \sqrt{\frac{\mu_0}{\kappa C_0}}
$$
and hence
$$
B_{\delta r}(x_1)\subset B_{d(x_1,\tau)}(x_1)\subset\{u>0\}.
$$
Note that $\delta \le 1$. We claim now that there is a ball
\begin{equation}\label{6.4}
B_{\frac{\delta}{2}r}(y) \subset B_{\delta r}(x_1) \cap B_r(z) \subset B_r(z) \setminus \partial\{u>0\},
\end{equation}
which means that the set $\partial\{u>0\}\cap\{t=\tau\} \cap \overline{B}_{1}$ is porous with porosity constant $\delta/2$.
 
To check \eqref{6.4} we choose $y\in[z,x_1]$ such that $|y-x_1|=\delta r/2$. For each $\xi\in B_{\frac{\delta}{2}r}(y)$ one has
$$
|\xi-x_1|\leq|\xi-y|+|y-x_1|\leq\frac{\delta}{2}r+\frac{\delta}{2}r=\delta r
$$
and, since $x_1\in\partial B_r(z)$, also
$$
|\xi-z|\leq|\xi-y|+|z-x_1|-|y-x_1|\leq\frac{\delta}{2}r+r-\frac{\delta}{2}r=r,
$$
and therefore \eqref{6.4} is true.
\end{proof}

\section{The free boundary condition} \label{s7}

In this section we study the behavior of the limiting function $u$ near the free boundary $\partial\{u>0\}$. We will assume that $F$ is rotational invariant with respect to the Hessian, i.e.,
\begin{equation}\label{7.1}
F(x,t,M)=F(x,t,OMO^{-1}),\,\,\forall O\in O(n),
\end{equation}
where $O(n)$ is the set of orthogonal matrices. In other words, $F$ depends on the eigenvalues of the Hessian. Additionally, we assume that the singular reaction term $\beta_\varepsilon$ is built as an approximation of unity, i.e.,
\begin{equation}\label{7.2}
\beta_\varepsilon(s):=\varepsilon^{-1}\beta(\varepsilon^{-1}s),
\end{equation}
where $\beta$ is a nonnegative smooth real function supported in $[0,1]$ such that
$$
\|\beta\|_\infty\leq1\,\,\textrm{ and }\,\,L:=\int_0^1\beta(s)\,ds<\infty.
$$
As noted in \cite{LW06}, when $f_\varepsilon\not\equiv0$ in \eqref{Equation Pe}, there exist limits $u=\displaystyle \lim_{\varepsilon\rightarrow0} u_\varepsilon$ which degenerate even when they have a smooth free boundary $\partial\{u>0\}$. When $f_\varepsilon\equiv0$ this can not happen  because of Hopf's principle. This shows that in general one cannot expect the free boundary condition to hold for any limit $u$. To keep things simple, we will assume from now on that $f_\varepsilon\equiv0$ in \eqref{Equation Pe}.
\begin{remark}\label{r7.1}
Note that Theorems \ref{t4.1} and \ref{t5.1} remain true when $f_\varepsilon\equiv 0$ and $\beta_\varepsilon$ is built as an approximation of unity.\
\end{remark}
Next, we consider the elliptic operator $F^*$, defined as the pointwise limit of $\varepsilon F(x,t,\varepsilon^{-1}M)$, i.e.,
\begin{equation}\label{7.3}
F^*(x,t,M):=\lim_{\varepsilon\rightarrow0}\varepsilon F(x,t,\varepsilon^{-1}M).
\end{equation} 
Observe, that in general such limit may not exist. However, when $F$ is concave or convex, then the limit exists. In fact, from the dominated convergence theorem one gets a stronger assertion (see \cite[Proposition 6.1]{RT11} for details): if $F$ satisfies $(A1)$, then
\begin{equation}\label{7.4}
\exists\,\,\,\,\lim_{\|M\|\rightarrow\infty}F_{ij}(x,t,M):=F^*_{ij}(x,t),
\end{equation}
and
$$
F^*(x,t,M)=tr(F^*_{ij}(x,t)M).
$$
Note that if $u_\varepsilon$ is a solution of 
$$
F(x,t,D^2u_\varepsilon)-\partial_tu_\varepsilon=\beta_\varepsilon(u_\varepsilon),
$$
then
\begin{equation}\label{7.5}
a^\varepsilon_{ij}(x,t)D_{ij}u_\varepsilon-\partial_tu_\varepsilon=\beta_\varepsilon(u_\varepsilon),
\end{equation}
where
\begin{equation}\label{7.6}
a^\varepsilon_{ij}(x,t):=\displaystyle\int_0^1F_{ij}(x,t,sD^2u_\varepsilon)\,ds.
\end{equation}
Moreover, from \cite{Wang}-\cite{Wang3}, we have enough regularity to argue as in the proof of \cite[Lemma 6.3]{RT11} to obtain the following result.
\begin{proposition}\label{p7.1}
If $u_\varepsilon$ is a solution of \eqref{Equation Pe} with $f_\varepsilon=0$ such that $u_\varepsilon\rightarrow u$ uniformly on compact subsets of $\Omega_T$ as $\varepsilon\rightarrow0$, then $a^\varepsilon_{ij}(x,t)$ converges pointwise to a uniformly elliptic matrix
$$
b_{ij}(x,t):=
\begin{cases}
  \displaystyle\int_0^1F_{ij}(x,t,sD^2u)\,ds, & \mbox{if } (x,t)\in\{u>0\} \\
  F^*_{ij}(x,t), & \mbox{if }(x,t)\in\partial\{u>0\},
\end{cases}
$$
where $F^*_{ij}$ and $a_{ij}^\varepsilon$ are defined by \eqref{7.4} and \eqref{7.6}, respectively.

\end{proposition}
In order to state the next result, we need to define blow-ups. The function
$$
v_\lambda(x,t):=\frac{1}{\lambda}v(x_0+\lambda x,t_0+\lambda^2t)
$$
will be referred to as a blow up of $v$ at $(x_0,t_0)$.
The following lemma deals with convergence for blow-up limits. It is a generalization of \cite[Lemma 4.4]{LW06}. Its proof is classical and we will omit it, referring the reader to \cite{Caf2,LW06} for details.
\begin{lemma}\label{l7.1}
Let $u_\varepsilon$ be a solution of \eqref{Equation Pe} with $f_\varepsilon=0$ such that $u_\varepsilon\rightarrow u$ uniformly on compact subsets of $\Omega_T$ as $\varepsilon\rightarrow0$. Let $(u_\varepsilon)_{\lambda_k}$ and $u_{\lambda_k}$ be blow-ups of $u_\varepsilon$ and $u$ at $(x_k,t_k)$, respectively, for $\lambda_k\rightarrow0$. If $(x_k,t_k)$, $(x_0,t_0)\in\partial\{u>0\}\cap\Omega_T$ are such that $(x_k,t_k)\rightarrow(x_0,t_0)$, and $u_{\lambda_k}\rightarrow U$ uniformly on compact sets of $\R^{n+1}$ as $k\rightarrow\infty$, then, up to a subsequence in $\varepsilon$, one has $\varepsilon\lambda_k^{-1}\rightarrow0$, and
\begin{itemize}
\item[1.] $(u_\varepsilon)_{\lambda_k}\rightarrow U$ uniformly on compact sets of $\R^{n+1}$;
\item[2.] $\nabla(u_\varepsilon)_{\lambda_k}\rightarrow\nabla U$ in $L^2_{\loc}(\R^{n+1})$;
\item[3.] $\frac{\partial}{\partial t}(u_\varepsilon)_{\lambda_k}\rightarrow\frac{\partial}{\partial t}U$ weakly in $L^2_{\loc}(\R^{n+1})$;
\item[4.] $\nabla u_{\lambda_k}\rightarrow\nabla U$ in $L^2_{\loc}(\R^{n+1})$;
\item[5.] $\frac{\partial}{\partial t}u_{\lambda_k}\rightarrow\frac{\partial}{\partial t}U$ weakly in $L^2_{\loc}(\R^{n+1})$.
\end{itemize}
\end{lemma}
In order to understand general limits, first we need to analyze a particular limit of $u_\varepsilon$. For that purpose we will need the next lemma.
\begin{lemma}\label{l7.2}
Let $(A1)$, $(A4)$, \eqref{7.1} and \eqref{7.2} hold. If $u_\varepsilon$ is a solution of \eqref{Equation Pe} with $f_\varepsilon=0$ such that $u_\varepsilon\rightarrow u$ uniformly on compact subsets of $\Omega_T$ as $\varepsilon\rightarrow0$, then $B_\varepsilon(\tau):=\int_0^\tau\beta_\varepsilon(s)\,ds$ is precompact in $L^1(\Omega')$ for every $\Omega'\subset\subset\Omega_T$, and $B_\varepsilon(u_\varepsilon)$ converges in $L^1_{\loc}(\Omega_T)$ either to zero or to $L$, where $L$ is the total mass of $\beta$.
\end{lemma}
\begin{proof}
Note that since $F$ is assumed to be rotational invariant, then one has $a^\varepsilon_{ij}=a^\varepsilon_{ji}$. The proof of the lemma then follows by multiplying the equation \eqref{7.5} with a suitable function, namely with $\partial_ku_\varepsilon\Psi$, for $\Psi\in C_0^\infty(\Omega_T)$ and integrating over $\Omega_T$. Then the first (elliptic) term on the right hand side is handled exactly as in \cite[Lemma 6.6]{RT11} and the second (parabolic) term is handled as in \cite[Proposition 4.1 and Lemma 4.1]{W03}.
\end{proof}
\begin{definition}\label{d7.1}
A unit vector $\nu\in\mathbb{R}^{n}$ is called inward unit spacial normal to the free boundary $\partial\{u>0\}$ at $(x_0,t_0)\in\partial\{u>0\}$, for times $t\leq t_0$, in the parabolic measure theoretic sense, if
$$
\lim_{r\rightarrow0+}\frac{1}{r^{n+2}}\int_{Q_r^-(x_0,t_0)}|\chi_{\{u>0\}}-\chi_{\{(x,t);\,(x-x_0)\cdot\nu>0\}}|\,dx\,dt=0.
$$
We will refer to $(x_0,t_0)$ as a regular point.
\end{definition}
The previous lemma brings us closer to understanding a special limit.
\begin{lemma}\label{l7.3}
Let $(A1)$, $(A4)$, \eqref{7.1} and \eqref{7.2} hold. If $u_\varepsilon$ is a solution of \eqref{Equation Pe} with $f_\varepsilon=0$ such that $u_\varepsilon\rightarrow u:=\alpha[(x-x_0)\cdot\nu]^+$ uniformly on compact subsets of $\Omega_T$ as $\varepsilon\rightarrow0$, for $\alpha>0$ and a regular point $(x_0,t_0)\in\Omega_T\cap\partial\{u>0\}$, then
$$
\alpha=\sqrt{\frac{2L}{F^*(x_0,t_0,\nu\otimes\nu)}},
$$
where $F^*$ is defined by \eqref{7.3} and $L$ is the total mass of $\beta$.
\end{lemma}
\begin{proof}
Without loss of generality we may assume that $(x_0,t_0)=(0,0)$ and $\nu=e_1$. As before, note that since $F$ is rotational invariant, then the matrix $(a_{ij}^\varepsilon)$ is symmetric. As in the proof of \cite[Proposition 5.1]{LW06} (see also the proof of \cite[Proposition 6.7]{RT11}), we then multiply \eqref{7.5} by $\partial_1u_\varepsilon\Psi$, for a $\Psi\in C_0^\infty(\Omega_T)$, and integrate over $\Omega_T$. After integration by parts we arrive at
\begin{eqnarray}\label{7.7}
&&\int_{\Omega_T}\Big(\frac{1}{2}a_{ij}^\varepsilon\partial_iu_\varepsilon\partial_ju_\varepsilon\partial_1\Psi+\frac{1}{2}\partial_1a_{ij}^\varepsilon\partial_iu_\varepsilon\partial_ju_\varepsilon\Psi-a_{ij}^{\varepsilon}\partial_iu_\varepsilon\partial_1u_\varepsilon\partial_j\Psi\Big)\,dx\,dt\nonumber\\
&-&\int_{\Omega_T}\Big(\partial_ja_{ij}^\varepsilon\partial_iu_\varepsilon\partial_1u_\varepsilon\Psi+\partial_tu_\varepsilon\partial_1u_\varepsilon\Psi\Big)=-\int_{\Omega_T}B_\varepsilon(u_\varepsilon)\partial_1\Psi.
\end{eqnarray}
Using Theorem \ref{t5.1}, Lemma \ref{l7.1}, Lemma \ref{l7.2} and Proposition \ref{p7.1}, we pass to the limit in \eqref{7.7}, obtaining 
$$
-\frac{\alpha^2}{2}\int_{\{x_1=0\}}F^*(0,0,e_1\otimes e_1)\Psi=-(L-\tilde{L})\int_{\{x_1=0\}}\Psi,
$$
where $\tilde{L}$ is equal to either zero or $L$ (see Lemma \ref{l7.2}). Since $\Psi$ was an arbitrary smooth function, from the last identity we obtain that
$$
\alpha^2=\frac{2(L-\tilde{L})}{F^*(0,0,e_1\otimes e_1)},
$$
but since $\alpha$ is assumed to be positive, then $\tilde{L}=0$ by Lemma \ref{l7.2}, and the result follows.
\end{proof}
The following theorem is the main result of this section.
\begin{theorem}\label{t7.1}
Let $(A1)$, $(A4)$, \eqref{7.1} and \eqref{7.2} hold. Let also $u_\varepsilon$ be a solution of \eqref{Equation Pe} with $f_\varepsilon=0$ such that $u_\varepsilon\rightarrow u$ uniformly on compact subsets of $\Omega_T$ as $\varepsilon\rightarrow0$. If $(x_0,t_0)\in\partial\{u>0\}$ is a regular point, and $\{u=0\}$ has uniform positive density near $(x_0,t_0)$, then
$$
u(x,t)=\sqrt{\frac{2L}{F^*(x_0,t_0,\nu\otimes\nu)}}[(x-x_0)\cdot\nu]^++o(|x-x_0|+|t-t_0|^{1/2}),
$$
close to $(x_0,t_0)$, where $F^*$ is defined by \eqref{7.3} and $L$ is the total mass of $\beta$.
\end{theorem}
Our result is in accordance with the corresponding result in the elliptic case: as it is known from \cite{RT11}, near the free boundary points $u$ behaves in the following way
$$
u(x)=\sqrt{\frac{2L}{F^*(x_0,\nu\otimes\nu)}}[(x-x_0)\cdot\nu]^++o(|x-x_0|),
$$
where
$$
F^*(x,M):=\lim_{\varepsilon\rightarrow0}\varepsilon F(x,\varepsilon^{-1}M).
$$
Theorem \ref{t7.1} establishes the free boundary condition for general fully nonlinear parabolic problems, thus extending the corresponding results from \cite{Caf2,LO08}.

The proof of Theorem \ref{t7.1} is based on a Hopf type lemma and an upper bound for the gradient of the limiting solution near free boundary points.

In order to proceed, set
$$
\mathcal{B}_\tau:=Q_\tau^-(0,0)\cap\{x_1>0\}.
$$
\begin{lemma}\label{l7.4}
Let $u\in Lip(1,1/2)(\mathcal{B}_\tau)$ for some $\tau>0$. If $u\geq0$ satisfies
$$
F(x,t,D^2u)-\partial_tu=0\,\,\textrm{ in }\,\,\{u>0\},
$$
and $u\equiv0$ in $\{x_1=0\}$, then, in $\mathcal{B}_\tau$, $u$ has the asymptotic development
$$
u(x,t)=\alpha x_1^++o(|x|+|t|^{1/2}),
$$
for $\alpha\geq0$.
\end{lemma}
\begin{proof}
By Remark \ref{r7.1}, we have the needed regularity to follow the steps of the proof of the corresponding result from \cite{Caf2} (see also \cite{LO08,RT11}). It is based on the construction of a suitable barrier function. For details, we refer the reader to the proof of \cite[Corollary A.1]{Caf2} (see also the proof of \cite[Lemma 6.10]{RT11}).
\end{proof}
The next theorem provides an upper bound for the gradient of $u$ near free boundary points. Its proof is, step by step, a minor adaptation of the one of Theorem 6.1 of \cite{LO08} (see also the proof of \cite[Theorem 6.1]{Caf2} and \cite[Theorem 6.11]{RT11} for the elliptic case in the fully nonlinear setting).
\begin{theorem}\label{t7.2}
If the hypotheses of Theorem \ref{t7.1} are satisfied, then
$$
\limsup_{(x,t)\rightarrow(x_0,t_0)}|\nabla u(x,t)|\leq\sqrt{\frac{2L}{F^*(x_0,t_0,\nu\otimes\nu)}},
$$
where $F^*$ is given by \eqref{7.3} and $L$ is the total mass of $\beta$.
\end{theorem}
We are now ready to prove the main result of this section.
\begin{proof}[Proof of Theorem \ref{t7.1}]
We use the approach from \cite[Theorem 7.1]{LW06} (see also \cite[Theorem 7.1]{LO08}) with modifications for the fully nonlinear setting as carried out in the elliptic framework in \cite[Theorem 6.9]{RT11}. 

Observe that, without loss of generality, we may assume that $(x_0,t_0)=(0,0)$ and $\nu=e_1$. Set
$$
u_\lambda(x,t):=\frac{1}{\lambda}u(\lambda x,\lambda^2t),\,\,\lambda>0
$$
and let $r>0$ be small enough to guarantee $Q_r:=Q_r(0,0)\subset\subset\Omega_T$. Since $(0,0)$ is a free boundary (regular) point, then $u(0,0)=0$. Also $u_\lambda\in Lip(1,1/2)(Q_{r/\lambda})$. Hence there exist a subsequence $\lambda_k\rightarrow0$ and a function $U\in Lip(1,1/2)(\R^{n+1})$ such that $u_{\lambda_k}\rightarrow U$ uniformly on compact subsets of $\R^{n+1}$.

We aim to show that 
$$U=\sqrt{\frac{2L}{F^*(0,0,e_1\otimes e_1)}} \, x_1^+$$ 
for $t\leq 0$.
From the definition of the inward spacial normal in the parabolic measure theoretic sense we infer (see Definition \ref{d7.1}) that for every $d>0$, as $\lambda\rightarrow0$, one has
$$
|\{u_\lambda>0\}\cap\{x_1<0\}\cap Q_d^-(0,0)\}|\rightarrow0
$$
and
$$
|\{u_\lambda\equiv0\}\cap\{x_1>0\}\cap Q_d^-(0,0)\}|\rightarrow0,
$$
where $|E|$ is the $(n+1)$-dimensional Lebesgue measure of the set $E$. Therefore, $U\equiv0$ in $\{x_1<0\}\cap\{t\leq0\}$. On the other hand, $U\geq0$ and, due to Theorem \ref{t5.1}, solves \eqref{1.2} with $f\equiv0$. Also, $\{U>0\}\cap\{t<0\}\subset\{x_1>0\}$. By Lemma \ref{l7.4},
\begin{equation}\label{7.8}
U(x,t)=\alpha x_1^++o(|x|+|t|^{1/2})\,\,\textrm{ in }\,\,\{x_1>0\}\cap\{t<0\}.
\end{equation}
Since $\{u=0\}$ has uniform positive density near $(0,0)$, then $\alpha>0$ in \eqref{7.8}. 

Let $U_\lambda$ be the blow up of $U$ at $(0,0)$, i.e.,
$$
U_\lambda(x,t):=\frac{1}{\lambda}U(\lambda x,\lambda^2t).
$$
Then, for a sequence $\lambda_k\rightarrow0$, one has $U_{\lambda_k}\rightarrow\alpha x_1^+$ in $\{t\leq0\}$, uniformly in compact subsets. Also, from Lemma \ref{l7.1} and Lemma \ref{l7.4} (up to a subsequence), we have $u_{\lambda_k}\rightarrow\alpha x_1^+$ in $\{t\leq0\}$, uniformly in compact subsets. An application of Lemma \ref{l7.3} then gives
$$
\alpha=\sqrt{\frac{2L}{F^*(0,0,e_1\otimes e_1)}}.
$$
On the other hand, by Theorem \ref{t7.2}, one has
$$
|\nabla U|\leq\sqrt{\frac{2L}{F^*(0,0,e_1\otimes e_1)}}.
$$
Since $U\equiv0$ in $\{x_1=0\}\cap\{t\leq0\}$, we obtain
\begin{equation}\label{7.9}
U\leq\sqrt{\frac{2L}{F^*(0,0,e_1\otimes e_1)}}x_1\,\,\textrm{ in }\,\,\{x_1>0\}\cap\{t\leq0\}.
\end{equation}
Also $U$ is a subsolution of \eqref{1.2} (with $f\equiv0$) and satisfies \eqref{7.8} with $\alpha=\sqrt{\frac{2L}{F^*(0,0,e_1\otimes e_1)}}$. Applying Hopf's principle, we conclude that the equality holds in \eqref{7.9}. 
\end{proof}
\begin{remark}\label{r7.2}
When $f_\varepsilon\not\equiv0$ in \eqref{Equation Pe} but $(A3)$ holds, then the conclusion of Theorem \ref{t7.1} is still true. The proof is carried out in \cite[Theorem 7.1]{LW06} for the case of the Laplace operator. With minor adaptations, it works also in our framework.
\end{remark}

\bigskip

\noindent{\bf Acknowledgments.} This work was partially supported by CNPq-Brazil, FCT grant SFRH/BPD/92717/2013, and by CMUC -- UID/MAT/00324/2013, funded by the Portuguese government through FCT/MCTES and co-funded by the European Regional Development Fund through Partnership Agreement PT2020.

GCR thanks the Analysis group at CMUC for fostering a pleasant and productive scientific atmosphere during his postdoctoral program.

\end{document}